\def\R{\mathbb{R}}
\DeclareMathOperator\Hess{Hess}
\DeclareMathOperator\proj{proj}
\def\clos{\overline}
\def\tensor{\otimes}
\def\<{\left\langle}
\def\>{\right\rangle}
\begin{document}

\theoremstyle{plain}
\newtheorem{theorem}{Theorem}
\newtheorem{lemma}[theorem]{Lemma}
\newtheorem*{lm}{Lemma}
\newtheorem{corollary}[theorem]{Corollary}
\newtheorem{proposition}[theorem]{Proposition}

\theoremstyle{definition}
\newtheorem{defn}[theorem]{Definition}
\newtheorem{exmp}{Example}

\theoremstyle{remark}
\newtheorem{blah}[theorem]{}

\title[A maximum principle for pointwise energies]{A maximum principle for pointwise energies of quadratic Wasserstein minimal networks}   
\author{Jonathan Dahl}         
\address{Department of Mathematics, University of California, Berkeley, CA}
\email{jdahl@math.berkeley.edu}
\thanks{The author was supported
by the NSF under RTG grant DMS-0838703.}
\date{}    

\begin{abstract}
We show that suitable convex energy functionals on a quadratic Wasserstein space satisfy a maximum principle on minimal networks. We explore consequences of this maximum principle for the structure of minimal networks.
\end{abstract}
\maketitle

\section{Introduction}

Given $k$ points $p_1,\dots,p_k$ in a geodesic space\footnote{A geodesic space is a metric space with an intrinsic metric such that any two points are joined by a shortest path.} $Y$, one can ask for a minimal network spanning $p_1,\dots,p_k$. For a complete, connected Riemannian manifold $M$, the space of Borel probability measure $P(M)$ may be metrized, allowing infinite distances, by the Wasserstein distance $W_2$ associated to the optimal transport problem for quadratic cost $c=d^2$. Furthermore, as shown by McCann \cite{McCann01}, the component $P_2(M)$ containing the point masses is a geodesic space. Continuing the investigations in \cite{SPOT}, we examine the structure of such $W_2$-minimal networks by considering gradient flows of suitable convex energy functionals on $P_2(M)$. In particular, we establish conditions under which the energy is maximized at a boundary measure of a minimal network. The book of Ambrosio, Gigli, and Savar\'e \cite{AGS05} is an excellent source for both the history and general results of gradient flows in $P_2(\R^n)$; Villani's book \cite{Villani07} serves a parallel role for optimal transport problems.

For $M=\R^n$, we will use the fact that each minimal network in $P_2(\R^n)$ is also a solution of an associated multi-marginal optimal transportation problem. A recent result of Pass \cite{Pass10} reduces uniqueness of solutions of multi-marginal problems to computations in terms of the cost function. We will perform these computations assuming that the underlying graph of the minimal network is a star and at least one boundary point is absolutely continuous, and use this uniqueness to show that any non-expansive map fixing the boundary points must fix the network.

We begin by defining Wasserstein space, the multi-marginal optimal transportation problems, and the minimal network problems. In Section \ref{ede}, we examine the special case of differential entropy in Euclidean space. We then generalize our maximum principle to an abstract class of energy functionals in Section \ref{mpfae}. Finally, in Section \ref{armmp}, we strengthen our maximum principle in the special case of minimal stars in $P_2(\R^n)$ by examining the corresponding multi-marginal optimal transportation problem.

\section{Background}

First, we introduce the multi-marginal problem, for comparison with our quadratic Wasserstein minimal network problem defined at the end of the section. 
Given a collection of spaces $X_1,\dots, X_m$ and a collection of subsets of probability measures
$\mathcal{P}_1\subset P(X_1),\dots,\mathcal{P}_m\subset P(Y_m)$, we define the set of
transport plans $\Pi(\mathcal{P}_1,\dots,\mathcal{P}_m)$ as the set of probability measures
$\pi\in P(X_1\times\dots\times X_m)$ such that $(\proj_{X_i})_\#\pi\in\mathcal{P}_i$ for all $i$. The multi-marginal Kantorovich problem for an infinitesimal cost function $c:X_1\times\dots\times X_m\to\R$ and boundary data $\mu_1\in P(X_1),\dots,\mu_m\in P(X_m)$ is then to minimize
\[
C(\pi):=\int_{X_1\times\dots\times X_m} c(x_1,\dots,x_m)\,d\pi
\]
over all transport plans $\pi\in\Pi(\mu_1,\dots,\mu_m)$. A minimizing transport plan is called an optimal transport plan.
This is a natural relaxation of a corresponding multi-marginal Monge problem, in which one must minimize
\[
C(T_2,\dots,T_m):=\int_{X_1} c(x_1,T_2(x_1),\dots,T_m(x_1))\,d\mu_1
\]
over all $(m-1)$-tuples of measurable maps $T_i:X_1\to X_i$  such that $(T_i)_\#\mu_1=\mu_i$ for all $i$. For each $(T_2,\dots,T_m)$, the corresponding transport plan $(T_2,\dots,T_m)_\#\mu_1$ is called deterministic.

In the case $m=2$, we recover the standard Kantorvich and Monge problems. We are particularly interested in the case $X_1=X_2=X$ with $c(x_1,x_2)=d^2(x_1,x_2)$ for a metric $d$ on $X$. Taking the square root of the minimal cost, we define
\[
W_2(\mu_1,\mu_2)=\left(\inf_{\pi\in\Pi(\mu_1,\mu_2)}\int_{X^2}d^2(x_1,x_2)\,d\pi\right)^{1/2}.\]
 Assuming $(X,d)$ is complete, separable, locally compact geodesic space, $W_2$ is a metric on
\[
P_2(X):=\left\{\mu\in P(X) : \int_X d^2(x,x_0)\,d\mu
<\infty \right\},
\]
called the Wasserstein distance of order $2$, and $(P_2(X),W_2)$ is a geodesic space (\cite{McCann01} for the Riemannian case, Corollary 7.22 of \cite{Villani07} for the general case).\footnote{Here $x_0\in X$ is an arbitrary point.}

If $c(x_1,\dots,x_m)=\sum_{i<j} c_{ij}(x_i,x_j)$, the cost $C(\pi)$ of a transport plan $\pi$ can be interpreted as the total cost of the network of transport plans $(\proj_{X_i}\times\proj_{X_j})_\#\pi$ with respect to the infinitesimal costs $c_{ij}$. If $X_1=\dots=X_m=X$, for some fixed $\hat c:X^2\to\R$ each $c_{ij}$ is either $\hat c$ or identically 0, and the $\hat c$-Kantorovich problem happens to define a metric $\hat d$ on $P(X)$ where $(P(X),\hat d)$ is a geodesic space, then for a $c$-optimal transport plan $\pi$, $C(\pi)$ is precisely the $\hat d$-length of the corresponding network in $(P(X),\hat d)$. If we then remove the constraint on the last $m-k$ marginals of $\pi$, the problem of minimizing
\[
C(\pi)=\int_{X^m} c(x_1,\dots,x_m)\,d\pi
\]
over all transport plans 
\[
\pi\in\Pi(\mu_1,\dots,\mu_k,P(X),\dots,P(X))
\]
 is equivalent to the problem of finding a length minimizing network in $(P(X),\hat d)$ in the class of networks parameterized by a specific graph $G$ of $m$ vertices $v_1,\dots,v_m$ with $v_1,\dots,v_k$ mapping to $\mu_1,\dots,\mu_k$ respectively. Here, two vertices $v_i,v_j$ in $G$, $i<j$, are adjacent if and only if
$c_{ij}=\hat c$.

The formulation just mentioned will not work for the case where $(P_2(X),W_2)$ is a geodesic space, so we consider now the standard minimal network problems for a metric space. We  define a network in a metric space $Y$ as a continuous map $\Gamma:G\to Y$, where $G$ is a connected graph metrized to have edges of length 1. All graphs $G$ in what follows are connected unless stated otherwise. Given $k$ vertices in a graph $G$ and $k$ points $p_1,\dots,p_k\in Y$, the $G$-parameterized minimal network problem is to find a network $\Gamma$ with image of minimal length subject to the constraints $\Gamma(v_1)=p_1,\dots,\Gamma(v_k)=p_k$. We  define also the general minimal network problem for $p_1,\dots,p_k$: find a network $\Gamma:G\to Y$ of minimal length subject to the constraint $p_1,\dots,p_k\in \Gamma(G)$. A solution $\Gamma$ is called a minimal network. Note that $G$ is allowed to vary in the general problem. Our main focus will be on minimal network problems for $Y=(P_2(X),W_2)$, a quadratic Wasserstein space.

\section{Euclidean differential entropy}\label{ede}

Under certain geometric assumptions on the base space $X$, regularity of the vertices of a solution of the general minimal network problem in $(P_2(X),W_2)$ can lead to combinatorial regularity of the solution. In particular,
\begin{theorem}[\cite{SPOT}]\label{str}
Suppose $M$ is a Riemannian manifold with isometric splitting
$M=S\times\mathbb{R}^n$ where $S$ is compact with nonnegative
sectional curvature, and $\mu_1,\dots,\mu_k\in P_2(M)$ have
compact support. Then there is a minimal network
in $P_2(M)$ spanning $\mu_1,\dots,\mu_k$. Furthermore, this solution
has a canonical representative $\Gamma:G\to P_2(M)$ such that
\begin{enumerate}
    \item $\Gamma$ is injective.
    \item $G$ is a tree.
    \item Vertices in $G$ not mapped to $\mu_1,\dots,\mu_k$
    have degree at least three.
    \item For any vertex $v$ in $G\setminus\Gamma^{-1}(\{\mu_1,\dots,\mu_k\})$,
    if $\Gamma(v)$ is absolutely continuous with respect to the volume measure,
    then the degree of $v$ is three and all pairs of geodesics in $\Gamma(G)$
    meeting at $\Gamma(v)$ do so with an angle of $2\pi/3$.
\end{enumerate}
\end{theorem}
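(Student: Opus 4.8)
The plan is to combine the direct method with the classical first-variation analysis of Steiner trees, carried out in the infinitesimally Riemannian structure that $(P_2(M),W_2)$ carries at absolutely continuous measures. For existence I would first bound the combinatorial complexity of competitors: since $(P_2(M),W_2)$ is a geodesic space, in any competitor one may erase non-terminal vertices of degree $2$ (replacing the two incident edges by a single geodesic) and delete an edge from any cycle, so it suffices to minimize over trees all of whose non-terminal vertices have degree $\ge 3$, and an Euler-characteristic count bounds the number of such vertices by $k-2$; hence only finitely many combinatorial types occur. Next I would compactify: choosing a ball $\overline{B_R}\subset\R^n$ with $\supp\mu_i\subset S\times\overline{B_R}$ for all $i$, convexity of $\overline{B_R}$ makes the nearest-point retraction $\R^n\to\overline{B_R}$ a $1$-Lipschitz map, so $F:=(\id_S,\proj_{\overline{B_R}})\colon M\to S\times\overline{B_R}$ is $1$-Lipschitz; pushing a competitor forward by $F$ does not increase length and fixes the boundary data, so one may assume every measure in the network lies in $P_2(S\times\overline{B_R})$, which is compact because $S\times\overline{B_R}$ is. Parametrizing each edge with constant speed, a length-minimizing sequence of a fixed combinatorial type is then uniformly Lipschitz into a compact space, and Arzel\`a--Ascoli together with lower semicontinuity of length yields a minimizer.

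Given a minimal network, I would take its canonical representative to be the inclusion $\Gamma\colon G\hookrightarrow P_2(M)$ of its image, equipped with the induced length metric and regarded as a finite metric graph; this $\Gamma$ is injective, which is item (1). If $\Gamma(G)$ contained a cycle one could delete all but the endpoints of one of its edges while keeping the graph connected and still containing $\mu_1,\dots,\mu_k$, contradicting minimality, so $G$ is a tree, which is item (2). A non-terminal leaf could be deleted together with its pendant edge, and a non-terminal degree-$2$ vertex replaced by a geodesic between its two neighbours, in each case contradicting minimality or simplifying $\Gamma$ further; re-taking the image gives item (3). Up to this point only the geodesic structure of $P_2(M)$ enters.

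For item (4), let $v$ be a non-terminal vertex with $\nu:=\Gamma(v)$ absolutely continuous, joined by edges to $\nu_1,\dots,\nu_d$ with unit initial velocities $u_1,\dots,u_d$. Since $\nu$ is absolutely continuous, the optimal plan from $\nu$ to each $\nu_i$ is induced by a unique map, the tangent cone to $P_2(M)$ at $\nu$ is a Hilbert space (the $L^2(\nu)$-closure of the gradient fields), and each $W_2(\,\cdot\,,\nu_i)$ is differentiable at $\nu$ with differential $w\mapsto-\langle u_i,w\rangle$; moving $\nu$ along a geodesic with initial velocity $w$ thus changes the total length of the network to first order by $-\sum_i\langle u_i,w\rangle$, and minimality forces this to be $\le 0$ for all $w$ in the (linear) tangent space, i.e. $\sum_{i=1}^d u_i=0$. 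If two edges, say those to $\nu_1,\nu_2$, met $\nu$ at an angle $<2\pi/3$ then $|u_1+u_2|>1$, and splitting $v$ into nearby vertices $\nu',\nu''$ — with $\nu'$ joined to $\nu_1,\nu_2,\nu''$ and $\nu''$ to the remaining neighbours — and separating them by $\varepsilon$ in the direction $(u_1+u_2)/|u_1+u_2|$ changes the length by $\varepsilon(1-|u_1+u_2|)+o(\varepsilon)<0$ (the first-variation expansions being valid precisely because $\nu$ is absolutely continuous), again contradicting minimality; hence $\langle u_i,u_j\rangle\le-\tfrac12$ for all $i\ne j$. Then $0=\bigl|\sum_i u_i\bigr|^2=d+2\sum_{i<j}\langle u_i,u_j\rangle\le d-\tfrac{d(d-1)}{2}$ forces $d\le 3$, so $d=3$ by item (3), and $\sum_{i<j}\langle u_i,u_j\rangle=-\tfrac32$ with each summand $\le-\tfrac12$ forces every inner product to equal $-\tfrac12$, whence the three geodesics at $\Gamma(v)$ are coplanar and meet pairwise at $2\pi/3$.

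I expect the main obstacle to be the infinitesimal Wasserstein calculus underlying item (4): rigorously justifying differentiability of the $W_2$-distances at an absolutely continuous $\nu$ with the stated first variation, identifying the tangent cone there with a Hilbert space, and — the genuinely delicate point — showing that the Steiner-splitting competitor is an admissible network whose length has the claimed first-order expansion. The compactness step for existence is also slightly delicate because of the noncompact $\R^n$ factor, but the convex-retraction trick reduces it to the compact case; the first-variation analysis is where the real work lies.
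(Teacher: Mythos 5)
Theorem \ref{str} is quoted in this paper from \cite{SPOT} without proof; the only indication given of its argument is that it ``follows from the inner product structure of the tangent cone at $\nu\in P_2^{ac}(M)$'' due to Lott--Villani, which is exactly the ingredient your step for item (4) rests on. Your outline --- direct method over finitely many combinatorial types after the $1$-Lipschitz retraction onto $S\times\clos{B_R}$, the elementary tree reductions for (1)--(3), and the balance-condition/Steiner-splitting first variation in the Hilbertian tangent cone for (4) --- is the standard route and matches the intended mechanism, and you correctly isolate the genuinely delicate points (exponentiating tangent directions and justifying the first-order expansion of the split competitor's length).
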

This result follows from the inner product structure of the tangent cone at $\nu\in P_2^{ac}(M)$ demonstrated by Lott and Villani \cite{LV04} for $\nu$ of compact support. In the special case $M=\R^n$, we may therefore remove the compact support hypothesis \cite{AGS05}.
It is natural to ask then if absolute continuity (w.r.t. Lebesgue measure $\mathcal{L}^n$) of each measure $\mu_i$ in the boundary data implies absolute continuity for the Steiner solution, i.e. $\Gamma(G)\subset P_2^{ac}(\R^n)$. In this direction, we recall the definition of Shannon's \cite{Shannon48} differential entropy
$h:P_2(\R^n)\to [-\infty,+\infty)$,
\[
h(\mu)=
\begin{cases}
-\int_{\R^n} \rho(x)\log\rho(x)\,d\mathcal{L}^n(x) & \text{if $\mu=\rho \mathcal{L}^n$,}\\
-\infty & \text{otherwise.}
\end{cases}
\]

\begin{proposition}\label{babycase}
If $\mu_1,\dots,\mu_k\in P_2^{ac}(\R^n)$ are taken as boundary data for a graph $G$ and $m=\min_i h(\mu_i)>-\infty$, then there is a solution of the $G$-parameterized minimal network problem contained entirely in $P_2^{ac}(\R^n)$.
\end{proposition}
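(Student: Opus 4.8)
The plan is to prove the sharper statement that $-h$ (equivalently $h$) obeys a maximum principle along any minimal network. Write $U:=-h$ for the negative entropy functional and $M_0:=\max_i U(\mu_i)=-m<+\infty$; I will show $U\circ\Gamma\le M_0$ on all of $G$ for a minimal $G$-network $\Gamma$. Since $\{U\le M_0\}\subset\{U<+\infty\}\subset P_2^{ac}(\R^n)$, this yields the proposition (and more). \emph{Setup.} A minimal $G$-network exists by the direct method: along a minimizing sequence the total length is bounded, so all vertex measures lie in a fixed $W_2$-ball, which is tight (uniformly bounded second moments), and $W_2$ is weakly lower semicontinuous (cf.\ \cite{SPOT}). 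Fix such a $\Gamma$ and, replacing the image of each edge by a constant-speed geodesic with the same endpoints — which does not increase total edge length — assume $\Gamma$ is a geodesic on each edge. By McCann's displacement convexity theorem (see \cite{AGS05}), $U$ is convex along $W_2$-geodesics in $P_2(\R^n)$, with convexity constant exactly $0$. Hence $U\circ\Gamma$ is convex on each edge, and it suffices to prove $U(\Gamma(v))\le M_0$ at every vertex $v$ of $G$.

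\emph{The maximum principle at the vertices.} Let $\mathcal B:=\{v\in V(G):U(\Gamma(v))>M_0\}$ and suppose $\mathcal B\ne\emptyset$. No boundary vertex lies in $\mathcal B$, so $\mathcal B\subsetneq V(G)$, and since $G$ is connected there is at least one edge joining $\mathcal B$ to its complement. Let $(\Phi_t)_{t\ge0}$ be the heat semigroup on $\R^n$, acting on $P_2(\R^n)$; by McCann's displacement convexity and the gradient-flow theory of \cite{AGS05}, $(\Phi_t)$ is the gradient flow of $U$: it is a $W_2$-contraction, $t\mapsto U(\Phi_t\mu)$ is nonincreasing with $\lim_{t\to0^+}U(\Phi_t\mu)=U(\mu)$ (by lower semicontinuity of $U$ together with this monotonicity), and it satisfies the evolution variational inequality $\frac{d}{dt}\frac12 W_2^2(\Phi_t\mu,\sigma)\le U(\sigma)-U(\Phi_t\mu)$ for every $\sigma\in P_2(\R^n)$ — the $0$-convex form, reflecting that $U$ is $0$-convex. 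Now define a competitor $\Gamma_t$ by $\Gamma_t(v):=\Phi_t\Gamma(v)$ for $v\in\mathcal B$, $\Gamma_t(v):=\Gamma(v)$ otherwise, and geodesics on the edges; the boundary data is unchanged. Edges internal to $\mathcal B$ do not lengthen (contraction); edges internal to the complement are unchanged; and for an edge from $w\in\mathcal B$ to $u\notin\mathcal B$, integrating the evolution variational inequality and using $U(\Gamma(u))\le M_0<U(\Gamma(w))$ together with $\lim_{t\to0^+}U(\Phi_t\Gamma(w))=U(\Gamma(w))$ gives $W_2(\Phi_t\Gamma(w),\Gamma(u))<W_2(\Gamma(w),\Gamma(u))$ for all small $t>0$. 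Choosing one such $t$ good for the finitely many edges leaving $\mathcal B$, the network $\Gamma_t$ has the same boundary data and strictly smaller total edge length, contradicting minimality. Hence $\mathcal B=\emptyset$, i.e.\ $U(\Gamma(v))\le M_0$ for every vertex, and convexity of $U$ along the geodesic edges propagates this to all of $G$. Thus $\Gamma(G)\subset\{U\le M_0\}\subset P_2^{ac}(\R^n)$; equivalently $h\circ\Gamma\ge m$ everywhere.

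\emph{Main obstacle.} The crux is the middle step, and it works precisely because of the gradient-flow structure of $-h$: displacement convexity with constant \emph{exactly} $0$ is what makes $\Phi_t$ a contraction (so edges internal to $\mathcal B$ cannot lengthen), while the evolution variational inequality is exactly what forces every edge leaving $\mathcal B$ to shorten. A technical wrinkle is that a vertex in $\mathcal B$ may carry a measure with $U=+\infty$ (singular, or absolutely continuous of infinite entropy); this is harmless, since $\Phi_t$ is defined from every point of $P_2(\R^n)$, is instantly of finite entropy for $t>0$, and the integrated inequality still produces the strict decrease. It is also worth noting that $\{U\le M_0\}$ is $W_2$-closed (as $U$ is $W_2$-lower semicontinuous), which is what makes "absolutely continuous" a robust conclusion — though the argument above never passes to a limit and uses only the inclusion $\{U\le M_0\}\subset P_2^{ac}(\R^n)$. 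Carrying out this scheme for a general displacement-convex energy admitting an evolution-variational-inequality gradient flow is what Section \ref{mpfae} will do.
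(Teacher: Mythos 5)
Your proof is correct, but it takes a genuinely different route from the paper's. The paper's argument is a one-step regularization: it forms the truncated functional $\mathcal{F}=\max(-m,-h)$, whose gradient flow is the heat flow stopped at entropy level $m$; this flow is non-expansive (so it maps minimizers to minimizers), fixes each $\mu_i$ (since they lie in the minimum set $\mathcal{F}^{-1}(-m)$), and instantly regularizes, so for $t>0$ the flowed network is a new solution lying in $P_2^{ac}(\R^n)$. You instead prove a genuine maximum principle for the \emph{given} minimizer: you flow only the vertices in the super-level set $\mathcal{B}=\{U>M_0\}$ with the unstopped heat flow, use $W_2$-contraction to control edges internal to $\mathcal{B}$, and use the integrated evolution variational inequality (with lower semicontinuity of $U$ at $t=0^+$) to force a strict shortening of every edge crossing $\partial\mathcal{B}$, contradicting minimality. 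Your conclusion is strictly stronger than the proposition (every such minimizer, not merely some solution, lies in $\{h\ge m\}$), and it anticipates the paper's Theorem \ref{maxprinc} — but even there the paper argues differently, by flowing the whole network with the stopped flow and deriving a branched geodesic to contradict the non-branching of the base space, rather than via the EVI. What each approach buys: the paper's Proposition \ref{babycase} proof needs only non-expansiveness, the regularization property, and the fixed-point property of the stopped flow, and is essentially one line; yours needs the EVI formulation of the heat flow but avoids both the stopped flow and any non-branching hypothesis, and localizes the contradiction to the interface edges. One shared technicality you pass over as lightly as the paper does: since the problem minimizes the length of the image, the identification of that length with the sum of the $W_2$-distances between edge endpoints requires starting from an injective (or otherwise non-overlapping) representative, which the paper secures by citing \cite{SPOT}; with that in place your strict-decrease comparison is sound.
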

\begin{proof}
\[
\mathcal{F}(\mu)=\max(-m,-h(\mu))
\]
defines a convex functional $\mathcal{F}:P_2(\R^n)\to(-\infty,+\infty]$ with effective domain
\[
D(\mathcal{F}):=\mathcal{F}^{-1}(\R)\subset P_2^{ac}(\R^n).
\]
 As in Theorem 11.2.1 of \cite{AGS05}, we obtain a non-expansive gradient flow for $\mathcal{F}$ on $\clos{D(\mathcal{F})}=P_2(\R^n)$.\footnote{This is simply the gradient flow for $-h$, or heat flow, stopped at $m$.}

The flow is instantly regularizing, so the flowed network lies in $P_2^{ac}(\R^n)$ for all $t>0$. Furthermore, $\mathcal{F}$ is minimized and constant on
each $\mu_i$ in the boundary data, so they remain fixed under the flow. Therefore, if we start with an injective solution $\Gamma:G\to P_2(\R^n)$ of the $G$-parameterized minimal network problem \cite{SPOT}, non-expansiveness of the flow provides a solution $\Gamma_t:G\to P_2^{ac}(\R^n)$ for $t>0$.
\end{proof}

Recall that the Steiner ratio of a metric space $X$ is
    \[\inf_M \frac{L_s(M)}{L_a(M)},\]
    where $M$ is any finite set of points in $X$,
    $L_s(M)$ is the infimum of the lengths of all networks
    spanning $M$, and $L_a(M)$ is the length of the
    minimal spanning tree of $M$.

\begin{corollary}\label{euclidsteinerratio}
The Steiner ratio of $P_2(\R^n)$ is at least $1/\sqrt3$.
\end{corollary}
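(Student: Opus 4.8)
The plan is to bound the Steiner ratio of $P_2(\R^n)$ below by combining the general lower bound $1/\sqrt{3}$ for the Steiner ratio of an arbitrary metric space with the extra regularity afforded by Proposition \ref{babycase}. Recall that for \emph{any} metric space, the Steiner ratio is at least $1/2$ trivially, and in fact a standard argument (doubling a minimal network to get an Eulerian circuit, then extracting a Hamiltonian-type spanning path) gives a lower bound; but $1/\sqrt3$ is sharper, so I expect the argument must exploit the Wasserstein structure rather than pure metric-space generalities. The key point is that in $P_2(\R^n)$ the quadratic nature of $W_2$ lets one improve the naive triangle-inequality estimates: if three geodesics meet at a Steiner point with the $120^\circ$ condition of Theorem \ref{str}(4), then the length saved by inserting that vertex is controlled by a factor $\sqrt3$, exactly as in the classical Euclidean Steiner problem.

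First I would reduce to a finite point configuration $\mu_1,\dots,\mu_k\in P_2(\R^n)$ and, using Proposition \ref{babycase} together with a density/approximation argument, assume without loss of generality that each $\mu_i$ is absolutely continuous with finite entropy, so that there is a minimal network $\Gamma:G\to P_2^{ac}(\R^n)$ with all interior vertices absolutely continuous. By Theorem \ref{str} this network has the canonical form: $G$ is a tree, interior vertices have degree exactly three, and the three geodesics at each interior vertex meet pairwise at angle $2\pi/3$. Next, at each interior (Steiner) vertex $\Gamma(v)$, I would use the $2\pi/3$ angle condition in the inner-product tangent cone at an absolutely continuous measure (the Lott--Villani structure invoked after Theorem \ref{str}) to run the classical local estimate: replacing the three edges at $v$ by the two shorter sides of the relevant geodesic triangle costs a factor of at most $\sqrt3$ in length, since in a space with this infinitesimal Euclidean structure the median-to-side comparison from the planar $120^\circ$ configuration applies. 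Summing these local comparisons over the tree — more carefully, building a spanning tree on $\{\mu_1,\dots,\mu_k\}$ by successively short-cutting Steiner points — yields $L_a(\{\mu_i\}) \le \sqrt3\, L_s(\{\mu_i\})$, i.e. $L_s/L_a \ge 1/\sqrt3$, and taking the infimum over finite configurations gives the claim.

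The main obstacle is the global bookkeeping: the clean classical statement "Steiner ratio $\ge 1/\sqrt3$" in the plane follows from the full-Steiner-tree structure plus a careful induction on the number of Steiner points, and I must verify that each step of that induction transfers to $P_2^{ac}(\R^n)$ using only the properties actually available — namely that geodesics between absolutely continuous measures behave infinitesimally like straight lines (the inner-product tangent cone) and that the canonical representative in Theorem \ref{str} has the stated combinatorics. In particular I must handle the possibility that short-cutting one Steiner point moves a neighboring vertex out of $P_2^{ac}$, which is why the entropy-truncation device of Proposition \ref{babycase} is needed to keep everything in the regular locus. I do not expect to reprove the planar Steiner-ratio estimate from scratch; rather I would cite it in the form "a minimal full Steiner tree in a Euclidean-like geodesic space, all of whose Steiner points satisfy the $120^\circ$ condition, has length at least $1/\sqrt3$ times that of the spanning tree" and check that Theorem \ref{str} plus Proposition \ref{babycase} put us exactly in that situation.
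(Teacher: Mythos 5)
Your overall strategy matches the paper's: reduce to finite-entropy absolutely continuous boundary data via an approximation argument, invoke Proposition \ref{babycase} and Theorem \ref{str} to get a minimal tree whose interior vertices have degree three with pairwise angles $2\pi/3$, and then run the Graham--Hwang short-cutting induction to compare with a spanning tree. However, there is a genuine gap in the one step that carries all the geometric content: you justify the local estimate (that short-cutting a Steiner point costs at most a factor $\sqrt3$) by appealing to the \emph{infinitesimal} inner-product structure of the tangent cone at an absolutely continuous measure. That structure is what gives meaning to the $2\pi/3$ angle condition, but it says nothing about distances at finite scale. The inequality you actually need is a hinge comparison: if two geodesics of lengths $W_2(\nu,\mu_a)$ and $W_2(\nu,\mu_b)$ leave the Steiner point $\nu$ at angle $2\pi/3$, then $W_2(\mu_a,\mu_b)^2\le W_2(\nu,\mu_a)^2+W_2(\nu,\mu_b)^2+W_2(\nu,\mu_a)W_2(\nu,\mu_b)\le 3\max\{W_2(\nu,\mu_a),W_2(\nu,\mu_b)\}^2$. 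A space can have Euclidean tangent cones everywhere and still violate this at finite scale, and note the inequality even reverses direction under nonpositive curvature, so no purely infinitesimal argument can deliver it.

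The missing ingredient, which the paper's proof supplies explicitly, is that $P_2(\R^n)$ is an Alexandrov space of nonnegative curvature (Lott--Villani), so that Toponogov's theorem for Alexandrov spaces gives exactly the hinge comparison above: the comparison angle is bounded above by the measured angle $2\pi/3$, hence the opposite side is bounded above by its Euclidean model. With that in hand your induction goes through as you describe, and your worry about vertices leaving $P_2^{ac}$ during the short-cutting is not an issue, since the comparison is applied to the fixed minimal network rather than to a deformed one. So the fix is to add the curvature hypothesis and Toponogov to your toolkit; everything else in your outline is the paper's argument.
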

\begin{proof}
$P_2(\R^n)$ is an Alexandrov space on nonnegative curvature
(see \cite{LV04} or \cite{MMS1} for stronger statements). Propostion \ref{babycase}, Theorem \ref{str}, and Toponogov's theorem for Alexandrov spaces allow us to apply the arguments of Graham and Hwang \cite{GH76} to solutions of the general minimal network problem, provided the boundary data $\mu_1,\dots\mu_k\in P_2^{ac}(\R^n)$ have finite differential entropy. Approximating arbitrary boundary data by finite linear combinations of characteristic functions of balls in $\R^n$, we may bound the Steiner ratio.
\end{proof}

\section{Maximum principle for admissible energies}\label{mpfae}

In this section, we will introduce a suitable generalization of (negative) differential entropy for which a maximum principle on minimal networks holds. This will allow us to obtain greater control of the structure of minimal networks.

\begin{defn}
A proper, lower semi-continuous functional $\phi:P_2(X)\to(-\infty,+\infty]$ for a space $X$ is called admissible if:
\begin{enumerate}
\item $D(\phi)=\phi^{-1}(\R)$ is a geodesic space, and $\phi$ is convex along geodesics in $D(\phi)$.
\item For each $a\in[-\infty,+\infty)$, $t>0$, there is a non-expansive map $\psi_{a,t}:P_2(X)\to D(\phi)$, called the $a$-stopped flow of $\phi$ at $t$.
\item The set of fixed points for $\psi_{a,t}$ is $\phi^{-1}(-\infty,a]$.
\end{enumerate}
\end{defn}

We are thinking of $\psi_{a,t}$ as the gradient flow for $\phi$ stopped at the value $a$. The usual convexity condition for such functionals is weak displacement convexity, where convexity is only assumed along some geodesic between every two points in $P_2(X)$. It may therefore be more natural to consider the convexity condition as weak displacement convexity combined with uniqueness of geodesics in $D(\phi)$, even though the condition in our definition is strictly weaker.

More importantly, we require the flow to exist on all of $P_2(X)$, and hence want either $D(\phi)=P_2(X)$ or $\clos{D(\phi)}=P_2(X)$ with an appropriate regularization estimate in order to  obtain $\psi_{a,t}(\clos{D(\phi)})\subset D(\phi)$. Existence of such regularizing gradient flows has been announced by Savar\'e \cite{Savare07}, under mild geometric assumptions on $X$. In particular, $X$ can be any complete Riemannian manifold, or Alexandrov space, with curvature bounded below and finite diameter. Similar results were obtained independently by Ohta for any
proper, lower semi-continuous convex functional $\phi:P_2(X)\to(-\infty,+\infty]$ for a compact Alexandrov space $X$ \cite{Ohta09}, although here we do not necessarily have $\psi_{a,t}(\clos{D(\phi)})\subset D(\phi)$. In the particular case of relative entropy w.r.t.\ volume measure, admissibility of $\phi$ is shown by Erbar \cite{Erbar10} for any connected, complete Riemannian manifold $X$ with a lower bound on Ricci curvature.

We now present our maximum principle.

\begin{theorem}\label{maxprinc}
Suppose $X$ is a complete, separable, locally compact, non-branching geodesic space and let $\phi:X\to(-\infty,+\infty]$ be an admissible functional.
For $\mu_1,\dots\mu_k\in P_2(X)$ and $m=\max_i \phi(\mu_i)$, any solution $\Gamma:G\to P_2(X)$ of the $G$-parameterized minimal network problem for boundary data $\mu_1,\dots,\mu_k$ has
\[
\Gamma(G)\subset \phi^{-1}(-\infty,m].
\]
\end{theorem}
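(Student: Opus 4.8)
The plan is to compose $\Gamma$ with a short time of the $m$-stopped flow, prove the bound for the resulting network, and then recover $\Gamma$ itself by lower semicontinuity. We may assume $m<+\infty$, since otherwise $\phi^{-1}(-\infty,m]=P_2(X)$ and there is nothing to prove.

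Fix $t>0$ and set $\Gamma_t=\psi_{m,t}\circ\Gamma$. Each $\mu_i$ has $\phi(\mu_i)\le m$, hence lies in $\phi^{-1}(-\infty,m]$, the fixed-point set of $\psi_{m,t}$; so $\Gamma_t$ is again a network through $\mu_1,\dots,\mu_k$. Non-expansiveness of $\psi_{m,t}$ makes the length of $\Gamma_t$ no larger than that of $\Gamma$, so $\Gamma_t$ is itself a solution of the $G$-parameterized problem, and $\Gamma_t(G)\subset D(\phi)$. A minimal network restricts to a geodesic on each edge — otherwise one could shorten that edge — so each $\Gamma_t|_e$ is a $W_2$-geodesic contained in $D(\phi)$, and therefore $\phi\circ\Gamma_t|_e$ is a finite convex, hence continuous, function on a compact interval. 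Such a function attains its maximum at an endpoint, whence
\[
\max_{\Gamma_t(G)}\phi \;=\; \max_{v\in V(G)}\phi\bigl(\Gamma_t(v)\bigr),
\]
a finite maximum that is attained. At the boundary vertices $\phi(\Gamma_t(v_i))=\phi(\mu_i)\le m$, so everything reduces to excluding the possibility that this maximum exceeds $m$.

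Suppose it does, and let $v$ be a vertex at which the maximum is attained. Since boundary vertices take values $\le m$ and $G$ is connected, we may choose $v$ so that some neighbor $v'$ satisfies $\phi(\Gamma_t(v'))<\phi(\Gamma_t(v))$. Write $\nu=\Gamma_t(v)$; for each edge $e$ at $v$ let $q_e$ be the image of the other endpoint, so $\ell_e=W_2(\nu,q_e)$ is the length of the geodesic $\Gamma_t|_e$. Moving $v$ along the flow curve $s\mapsto\psi_{m,s}(\nu)$, keeping all other vertices fixed, and reconnecting by geodesics produces a competing network of length $L(\Gamma_t)+F(s)$, where
\[
F(s)=\sum_{e\ni v}\bigl(W_2(\psi_{m,s}(\nu),q_e)-\ell_e\bigr),\qquad F(0)=0.
\]
Now $\psi_{m,t}$ is the gradient flow of the convex functional $\max(m,\phi)$, whose set of minimizers is exactly $\phi^{-1}(-\infty,m]$, so the evolution variational inequality with base point $\nu$ and reference point $q_e$ gives
\[
\tfrac12\,\left.\frac{d^{+}}{ds}\,W_2\bigl(\psi_{m,s}(\nu),q_e\bigr)^{2}\right|_{s=0}\;\le\;\max\bigl(m,\phi(q_e)\bigr)-\phi(\nu);
\]
dividing by $\ell_e$ bounds the right derivative of the $e$-th summand of $F$ at $s=0$ by $\bigl(\max(m,\phi(q_e))-\phi(\nu)\bigr)/\ell_e$. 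Since $\nu$ maximizes $\phi\circ\Gamma_t$ with value strictly above $m$, every such bound is $\le 0$, and the one for the edge to $v'$ is $<0$; summing gives $F'(0^{+})<0$, hence $F(s)<0$ for small $s>0$, contradicting minimality of $\Gamma_t$. Therefore $\Gamma_t(G)\subset\phi^{-1}(-\infty,m]$ for every $t>0$.

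Finally, as $t\to 0^{+}$ the maps $\psi_{m,t}$ converge pointwise to the identity, so $\Gamma_t(p)\to\Gamma(p)$ for each $p\in G$; lower semicontinuity of $\phi$ then yields $\phi(\Gamma(p))\le\liminf_{t\to0^{+}}\phi(\Gamma_t(p))\le m$, which is the assertion. The substantive point is the interior-vertex step: it relies on having an evolution variational inequality for $\psi_{m,t}$ — that is, on the convexity packaged into admissibility — and on the first-variation estimate for $W_2$-lengths of networks along the flow curve, which is where the non-branching hypothesis on $X$ enters, keeping the competing geodesics under control.
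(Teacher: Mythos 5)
Your reduction steps agree with the paper's: compose with the stopped flow $\psi_{m,t}$, note that the boundary measures are fixed and the flow is non-expansive so $\Gamma_t$ is again minimal with image in $D(\phi)$, and use convexity of $\phi$ along the edge geodesics to push the maximum to a vertex. The divergence, and the gap, is in how you exclude an interior vertex with $\phi>m$. Your argument hinges on the evolution variational inequality
\[
\tfrac12\,\frac{d^{+}}{ds}\,W_2^2\bigl(\psi_{m,s}(\nu),q_e\bigr)\Big|_{s=0}\le \max\bigl(m,\phi(q_e)\bigr)-\phi(\nu),
\]
but nothing in the definition of an admissible functional provides this. Admissibility gives you exactly three things: convexity of $\phi$ along geodesics in $D(\phi)$, existence of a non-expansive map $\psi_{a,t}$ into $D(\phi)$, and the identification of its fixed-point set with $\phi^{-1}(-\infty,a]$. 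The phrase ``we are thinking of $\psi_{a,t}$ as the gradient flow stopped at $a$'' is motivation, not a hypothesis; a non-expansive semigroup with the prescribed fixed-point set need not satisfy an EVI, and in general metric spaces geodesic convexity of the functional does not by itself yield EVI for its gradient flow. So the differential inequality that drives your first-variation computation $F'(0^{+})<0$ is unjustified from the stated axioms, and the contradiction with minimality does not go through. Relatedly, your closing remark that non-branching of $X$ ``keeps the competing geodesics under control'' is not backed by anything concrete in your argument --- as written, your proof never actually uses the non-branching hypothesis.

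The paper's proof uses only the three axioms and puts non-branching at the center. Supposing $M=\max_{\Gamma_{t_0}(G)}\phi>m$, it chooses an intermediate level $a\in(m,M)$ with $\phi\neq a$ at all vertices and applies a \emph{second} stopped flow $\psi_{a,s}$ to the minimal network $\Gamma_{t_0}$. Along some edge $e_1$ joining a vertex with $\phi\le a$ to a vertex $v_2$ with $\phi>a$, convexity gives an initial subinterval $e_1'$ on which $\phi\le a$; that subinterval is fixed pointwise by $\psi_{a,s}$ while $v_2$ is moved. Since $\psi_{a,s}\circ\Gamma_{t_0}$ is again minimal, $\psi_{a,s}(e_1)$ is a geodesic agreeing with the geodesic $e_1$ on $e_1'$ but ending elsewhere --- a branching geodesic in $P_2(X)$, which is impossible because $P_2(X)$ inherits non-branching from $X$. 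If you want to salvage your quantitative route, you would need to add the EVI (or gradient-flow structure with a metric slope inequality) to the definition of admissibility; as the theorem is stated, the branching argument is the one that closes the case.
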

\begin{proof}
If $m=\infty$, there is nothing to prove, so assume $m<\infty$. Let $\Gamma_t=\psi_{m,t}\circ\Gamma$. $\psi_{m,t}$ is non-expansive and $\mu_1,\dots,\mu_k$ are fixed by $\psi_{m,t}$, so $\Gamma_t$ also solves the $G$-parameterized minimal network problem. In particular, the image of each edge is a geodesic, and $\psi$ is continuous on $\Gamma_t(G)$ by convexity. Fix $t_0>0$. Let $M=\max_{\Gamma_{t_0}(G)}\phi$, and suppose that $M>m$. $G$ is a finite graph, so we may choose $a\in(m,M)$ such that $\phi\neq a$ on vertices of $\Gamma_{t_0}(G)$.

Consider $\psi_{a,s}\circ\Gamma_{t_0}$. For some edge $e_1$ of $\Gamma_{t_0}$, $\psi\le a$ on an initial interval $e'_1$ of $e_1$ while $\psi>a$ at the opposite endpoint $v_2$.  As before, $\psi_{a,s}(e_1)$ is a geodesic, but $\psi_{a,s}(e'_1)=e'_1$ while $\psi_{a,s}(v_2)\neq v_2$. We therefore have a branched geodesic in $P_2(X)$, contradicting the fact that $X$ is non-branching by Corollary 7.32 of \cite{Villani07}. Thus $M=m$.
\end{proof}

\begin{corollary}\label{finengstr}
Suppose $M$ is a Riemannian manifold with isometric splitting
$M=S\times\mathbb{R}^n$ where $S$ is compact with nonnegative
sectional curvature, $\mu_1,\dots,\mu_k\in P_2(M)$ have
compact support, and $\max_i \phi(\mu_i)<+\infty$ for an admissible functional $\phi$. Then there is a minimal network
in $D(\phi)$ spanning $\mu_1,\dots,\mu_k$. Furthermore,
if $D(\phi)\subset P_2^{ac}(M)$,
this solution
has a canonical representative $\Gamma:G\to P_2(M)$ such that
\begin{enumerate}
    \item $\Gamma$ is injective.
    \item $G$ is a tree.
    \item Vertices in $G$ not mapped to $\mu_1,\dots,\mu_k$
    have degree three.
    \item For any vertex $v$ in $G\setminus\Gamma^{-1}(\{\mu_1,\dots,\mu_k\})$,
    all pairs of geodesics in $\Gamma(G)$
    meeting at $\Gamma(v)$ do so with an angle of $2\pi/3$.
\end{enumerate}
\end{corollary}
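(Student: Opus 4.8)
The plan is to obtain the corollary by feeding the structure theorem, Theorem~\ref{str}, into the maximum principle, Theorem~\ref{maxprinc}; essentially no new analysis is required. Since $S$ is compact, $M=S\times\R^n$ is a complete, separable, locally compact Riemannian manifold, hence a non-branching geodesic space --- this is precisely the hypothesis on the base space that Theorem~\ref{maxprinc} needs, the non-branching property entering through Corollary 7.32 of \cite{Villani07} as in the proof of that theorem. So both Theorem~\ref{str} and Theorem~\ref{maxprinc} are available with $X=M$.

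First I would invoke Theorem~\ref{str} to produce a minimal network spanning $\mu_1,\dots,\mu_k$ together with its canonical representative $\Gamma\colon G\to P_2(M)$ satisfying properties (1)--(4) of that theorem: $\Gamma$ injective, $G$ a tree, every interior vertex of degree at least three, and $2\pi/3$-trivalence at each interior vertex whose image is absolutely continuous. This $\Gamma$ solves the general minimal network problem, hence also the $G$-parameterized problem for its own underlying tree $G$ with boundary vertices sent to $\mu_1,\dots,\mu_k$; passing to the canonical representative changes neither the image of the network nor its minimal length, so this last point is legitimate. Now apply Theorem~\ref{maxprinc} with the admissible functional $\phi$: as $m=\max_i\phi(\mu_i)<+\infty$ by hypothesis, we get $\Gamma(G)\subset\phi^{-1}(-\infty,m]$, and since $\phi$ takes values in $(-\infty,+\infty]$ this level set is contained in $\phi^{-1}(\R)=D(\phi)$. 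Thus $\Gamma$ is a minimal network whose image lies in $D(\phi)$, which is the first assertion. No gradient flow is needed here: the maximum principle already pins down every $G$-parameterized solution, so the representative handed to us by Theorem~\ref{str} is automatically $D(\phi)$-valued.

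For the refined conclusion, assume in addition $D(\phi)\subset P_2^{ac}(M)$. Then by the previous step every measure on the network lies in $D(\phi)\subset P_2^{ac}(M)$; in particular $\Gamma(v)\in P_2^{ac}(M)$ for every vertex $v$ not mapped to one of the $\mu_i$. Property (4) of Theorem~\ref{str} is thus triggered at every such $v$, forcing $\deg v=3$ and the $2\pi/3$ angle condition there --- the angles being well defined because the measures along the network still have compact support (optimal geodesics in $P_2(M)$ between compactly supported measures remain compactly supported), so the Lott--Villani tangent-cone inner product underlying Theorem~\ref{str} is in force. Combined with properties (1)--(2) of Theorem~\ref{str} this yields all four claimed properties, with the degree bound in (3) sharpened from ``$\ge 3$'' to ``$=3$''.

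The proof is therefore mostly bookkeeping; the steps I would treat with the most care are (a) checking that the hypotheses of Theorem~\ref{maxprinc} genuinely hold for $X=M$, namely completeness from compactness of $S$ and the non-branching property from the Riemannian structure, and (b) the inclusion $\phi^{-1}(-\infty,m]\subset D(\phi)$, which is exactly where the finiteness hypothesis $\max_i\phi(\mu_i)<+\infty$ enters. The one mildly delicate point, rather than a genuine obstacle, is justifying that the canonical representative of Theorem~\ref{str} may be fed directly into the maximum principle; this is handled above by observing that canonicalization preserves both the image and the minimality of the network.
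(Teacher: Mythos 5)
Your proposal is correct and is precisely the argument the paper intends (the corollary is stated without an explicit proof, being the direct combination of Theorem~\ref{str} and Theorem~\ref{maxprinc}): the maximum principle forces $\Gamma(G)\subset\phi^{-1}(-\infty,m]\subset D(\phi)$, and when $D(\phi)\subset P_2^{ac}(M)$ the conditional clause (4) of Theorem~\ref{str} activates at every interior vertex, upgrading the degree bound to exactly three and giving the $2\pi/3$ angles. Your care about feeding the canonical representative into the $G$-parameterized maximum principle and about the inclusion $\phi^{-1}(-\infty,m]\subset D(\phi)$ is exactly where the hypotheses are used.
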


Again, for $M=\R^n$, compactness of supports is unnecessary. Typical candidates for $\phi$ with $D(\phi)\subset P_2^{ac}(\R^n)$ are internal energies, such as differential entropy, relative entropy with respect to an absolutely continuous, log-concave reference measure, and
the power functional
\[
\mathcal{F}(\mu)=
\begin{cases}
\int_{\R^n} (\rho(x))^m\,d\mathcal{L}^n(x) & \text{if $\mu=\rho \mathcal{L}^n$,}\\
+\infty & \text{otherwise,}
\end{cases}
\]
for some fixed $m>1$.


Applying our work again to the special case of finite linear combinations of characteristic functions of balls, we may generalize Corollary \ref{euclidsteinerratio}.

\begin{corollary}\label{steinerratio}
If $M$ is a Riemannian manifold with isometric splitting
$M=S\times\mathbb{R}^n$ where $S$ is compact with nonnegative
sectional curvature,
then the Steiner ratio of $P_2(M)$ is at least $1/\sqrt3$.
\end{corollary}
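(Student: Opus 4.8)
The plan is to repeat the proof of Corollary~\ref{euclidsteinerratio} almost verbatim, with Corollary~\ref{finengstr} playing the role that Proposition~\ref{babycase} plays there; the one substantive change is that $S\times\R^n$ carries no canonical ``Lebesgue measure,'' so in place of Euclidean differential entropy I would use the relative entropy $\phi(\mu)=\mathrm{Ent}(\mu\mid\Vol_M)$ with respect to the Riemannian volume of $M$. First I would record the ambient geometry: since $S$ is compact with nonnegative sectional curvature and $\R^n$ is flat, the Riemannian product $M=S\times\R^n$ is complete and has nonnegative sectional curvature, hence $\Ric_M\ge 0$; consequently $P_2(M)$ is a non-branching geodesic space that is nonnegatively curved in the sense of Alexandrov, just as in the proof of Corollary~\ref{euclidsteinerratio} (via \cite{LV04}, \cite{MMS1}). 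By the result of Erbar \cite{Erbar10} recalled in Section~\ref{mpfae}, the functional $\phi=\mathrm{Ent}(\cdot\mid\Vol_M)$ is admissible on $P_2(M)$, and since finiteness of relative entropy forces absolute continuity, $D(\phi)\subset P_2^{ac}(M)$.

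Next I would prove the Steiner bound for boundary data with bounded, compactly supported densities --- in particular for finite linear combinations of normalized characteristic functions of metric balls in $M$. For such $\mu_1,\dots,\mu_k$ one has $\max_i\phi(\mu_i)<+\infty$, because on the compact support the density $\rho$ is bounded, while $\rho\log\rho\to 0$ where $\rho\to 0$. Corollary~\ref{finengstr} then yields a minimal network spanning $\mu_1,\dots,\mu_k$ contained in $D(\phi)\subset P_2^{ac}(M)$, whose canonical representative is an injective tree in which every non-terminal vertex has degree three and every pair of geodesics meeting at such a vertex does so at angle $2\pi/3$. Feeding this structural information, together with Toponogov's theorem in the nonnegatively curved space $P_2(M)$, into the argument of Graham and Hwang \cite{GH76} gives $L_s(F)\ge\frac{1}{\sqrt3}\,L_a(F)$ for every finite set $F=\{\mu_1,\dots,\mu_k\}$ of this type, exactly as in Corollary~\ref{euclidsteinerratio}.

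Finally I would remove the restriction on $F$ as in Corollary~\ref{euclidsteinerratio}: an arbitrary finite subset of $P_2(M)$ is approximated in $W_2$ by finite sets of the above type, and both the minimal spanning tree length $L_a$ and the Steiner length $L_s$ depend Lipschitz-continuously on the finitely many terminals, so the inequality $L_s/L_a\ge 1/\sqrt3$ passes to the limit; taking the infimum over all finite subsets then bounds the Steiner ratio of $P_2(M)$ below by $1/\sqrt3$.

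The step I expect to require the most care is not a new estimate but the verification that the imported machinery genuinely applies in this noncompact, infinite-volume setting: that $\mathrm{Ent}(\cdot\mid\Vol_M)$ on $M=S\times\R^n$ is admissible in the precise sense demanded in Section~\ref{mpfae} --- the stopped flows $\psi_{a,t}$ existing on all of $P_2(M)$ with the prescribed fixed-point sets --- which I would deduce from Erbar's work together with the lower Ricci bound $\Ric_M\ge 0$, and that the nonnegative-curvature and Alexandrov-geometry inputs underlying the Graham--Hwang argument remain valid when $M$ is noncompact, which they do, the $\R^n$ factor being precisely the case already settled in Corollary~\ref{euclidsteinerratio}.
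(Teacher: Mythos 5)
Your argument is correct and follows the paper's proof in all essentials: admissibility of relative entropy with respect to the volume measure, the structure theorem of Corollary~\ref{finengstr} for compactly supported boundary data of finite energy, the Graham--Hwang/Toponogov argument already used for Corollary~\ref{euclidsteinerratio}, and approximation of arbitrary terminals by combinations of characteristic functions of balls. The only divergence is that the paper first reduces to a compact base by projection in $M$ before invoking admissibility (so that either \cite{Savare07} or \cite{Erbar10} applies), whereas you apply Erbar's result directly on the noncompact product using $\Ric_M\ge 0$; this is consistent with the paper's own description of \cite{Erbar10} in Section~\ref{mpfae}, so it is a cosmetic rather than substantive difference.
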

\begin{proof}
We may clearly reduce to the compact case by projection in $M$, where relative entropy
\[
\mathcal{F}(\mu)=
\begin{cases}
\int_{M} \rho(x)\log \rho(x)\,d\gamma & \text{if $\mu=\rho \gamma$,}\\
+\infty & \text{otherwise,}
\end{cases}
\]
for the volume measure $\gamma$ defines an admissible functional as in \cite{Erbar10} or \cite{Savare07}.
The result now follows as before for Corollary \ref{euclidsteinerratio}.
\end{proof}

\section{A related multi-marginal problem}\label{armmp}

We now motivate a related problem in the uniqueness theory of multi-marginal problems.
Let $l\ge3$, and consider the $G$-parameterized minimal network problem of minimizing
\[
\Phi(\nu):=\sum_{i=1}^l W_2(\mu_i,\nu)
\]
for some fixed boundary data $\mu_1,\dots,\mu_l\in P_2(\R^n)$, where $G$ is the star with $l$ leaves. Suppose $\nu_0$ minimizes $\Phi$, and assume we are in the nontrivial case
\[
W_2(\mu_i,\nu_0)>0,\quad 1\le i\le l.
\]

For
\[
\sigma_i:=1/W_2(\nu_0,\mu_i),
\]
$\nu_0$ is also a minimizer of
\[
\Psi(\mu):=\sum_{i=1}^l \sigma_i W_2^2(\mu_i,\mu),
\]
as the corresponding $l$-plane and ellipsoid are tangent.

\begin{equation*}
\begin{split}
\Psi(\mu)
&=\sum_{i=1}^l \sigma_i
\inf_{\pi\in\Pi(\mu_i,\mu)}\int_{\R^{2n}}|x_1-x_2|^2\,d\pi\\
&=
\inf_{\pi\in\Pi(\mu_1,\dots,\mu_l,\mu)}\int_{\R^{(l+1)n}}
\sum_{i=1}^l \sigma_i |x_i-x_j|^2\,d\pi,
\end{split}
\end{equation*}
so
\[
\inf_{\nu\in P_2(\R^n)}\Psi(\nu)
=\inf_{\pi\in\Pi(\mu_1,\dots,\mu_l,P_2(\R^n))}\int_{\R^{(l+1)n}}
\sum_{i=1}^l \sigma_i |x_i-x_{l+1}|^2\,d\pi.
\]

For simplicity, suppose that $\mu_1,\dots,\mu_l\in P_2(\R^n)$ have compact support,
take $R$ large enough that the support of each $\mu_i$ is contained in $B_R(0)$, and set $B=\clos{B_{R+1}(0)}$.

By standard duality arguments, the infimum is achieved and clearly supported in $B$, with
\[
\min_{\pi\in\Pi(\mu_1,\dots,\mu_l,P_2(B))}\int_{B^{l+1}}
\sum_{i=1}^l \sigma_i |x_i-x_{l+1}|^2\,d\pi
=\sup_{(\phi_1,\dots,\phi_{l+1})\in S}
\left(\sum_{i=1}^{l}\int_{B}\phi_i\,d\mu_i\right)-\|\phi_{l+1}\|_\infty,
\]
for
\[
S=\left\{(\phi_1,\dots,\phi_{l+k})\in (C_b(B))^{l+1}:
\forall x_1,\dots,x_{l+1}\in B, \sum_{i=1}^{l+1} \phi_i(x_i)\le
\sum_{i=1}^{l} \sigma_i |x_i-x_{l+1}|^2 \right\}.
\]
If we let
\[
\bar x:=\left(\sum_{i=1}^l \sigma_i x_i\right)\bigg/\left(\sum_{i=1}^l \sigma_i \right)
\]
 and
\[
S'=\left\{(\phi_1,\dots,\phi_l) \in (C_b(B))^{l}:
\forall x_1,\dots,x_l\in B, \sum_{i=1}^{l}\phi_i(x_i)\le\sum_{i=1}^{l}\sigma_i|x_i-\bar x|^2\right\},
\]
we may suitably adjust any $(\tilde\phi_1,\dots,\tilde\phi_{l+1})\in S$ to obtain an element of $S'$ by
taking
\[
\phi_1=\tilde\phi_1+M, \phi_2=\tilde\phi_2, \dots, \phi_l=\tilde\phi_l
\]
for
\[
M=\min\left(0,\, \min_{x\in B} \tilde\phi_{l+1}(x)\right).
\]
$(\phi_1,\dots,\phi_l,0)\in S$ by construction and $\|0\|_\infty=0$, so
\[\sup_{(\phi_1,\dots,\phi_{l+1})\in S}
\left(\sum_{i=1}^{l}\int_{B}\phi_i\,d\mu_i\right)-\|\phi_{l+1}\|_\infty
=\sup_{(\phi_1,\dots,\phi_l) \in S'}
\left(\sum_{i=1}^{l}\int_{B}\phi_i\,d\mu_i\right).
\]
The right hand side is also dual for the multi-marginal Kantorovich problem with infinitesimal cost
$c(x_1,\dots,x_l)=\sum_{i=1}^{l}\sigma_i |x_i-\bar x|^2$, so
\[
\min_{\pi\in\Pi(\mu_1,\dots,\mu_l,P_2(B))}\int_{B^{l+1}}
\sum_{i=1}^l \sigma_i |x_i-x_{l+1}|^2\,d\pi
=
\min_{\pi\in\Pi(\mu_1,\dots,\mu_l)}\int_{B^{l}}
\sum_{i=1}^l \sigma_i |x_i-\bar x|^2\,d\pi.
\]
We may also see this directly by noting that the map $f(x_1,\dots,x_l)=\bar x$ satisfies
\[
\int_{B^{l+1}}
\sum_{i=1}^l \sigma_i |x_i-x_{l+1}|^2\,d(\pi\times f_\#\pi)
=
\int_{B^{l}}
\sum_{i=1}^l \sigma_i |x_i-\bar x|^2\,d\pi
\]
for all $\pi\in\Pi(\mu_1,\dots,\mu_l)$, and any minimizer in
$\Pi(\mu_1,\dots,\mu_l,P_2(B))$
must have this form by the minimizing property of $f$.\footnote{In fact, the equality continues to hold for $B=\R^n$, arbitrary $\mu_1,\dots,\mu_l\in P_2(\R^n)$, and arbitrary graphs $G$, with the minimum on the left achieved by our assumed solution of the minimal network problem,
and the minimum on the right achieved by a solution of the Kantorovich problem
\cite{Kellerer84}.} In particular, uniqueness of the minimizer $\nu_0$ of $\Psi$ would follow from uniqueness of a minimizer for the multi-marginal Kantorovich problem for $\mu_1,\dots,\mu_l$ and $c(x_1,\dots,x_l)=\sum_{i=1}^{l} \sigma_i |x_i-\bar x|^2$. Furthermore, such uniqueness would imply that $\nu=\nu_0$ is the unique solution of the system
\[
W_2(\mu_1,\nu)=W_2(\mu_1,\nu_0),\dots,W_2(\mu_l,\nu)=W_2(\mu_l,\nu_0).
\]
This would give a result similar to our maximum principle above, without referencing the potential $\phi$ of the non-expansive flow. In fact, any non-expansive map fixing the boundary data $\mu_1,\dots,\mu_l$ would be forced to fix $\nu_0$ as well.

\begin{proposition}\label{absmaxprinc}
The multi-marginal Kantorovich problem for $\mu_1,\dots,\mu_l\in P_2(\R^n)$ with compact support, $\mu_1\in P_2^{ac}(\R^n)$, and
\[
c(x_1,\dots,x_l)=\sum_{i=1}^{l} \sigma_i |x_i-\bar x|^2
\]
 has a unique solution for any collection of positive weights $\sigma_i$. Hence, any non-expansive map $F:P_2(\R^n)\to P_2(\R^n)$ with $F(\mu_1)=\mu_1,\dots,F(\mu_l)=\mu_l$ must fix the free vertex $\nu_0$ of a solution of the $G$-parameterized minimal network problem, where $G$ is the star whose $l$ leaves are the fixed vertices.
\end{proposition}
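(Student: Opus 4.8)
The plan is to deduce the second assertion from the first together with the reduction carried out just before the proposition, and to prove the first assertion by checking that the cost $c$ satisfies the hypotheses of Pass's uniqueness criterion \cite{Pass10}. Begin by recording the structure of $c$. Writing $\Sigma=\sum_{i=1}^{l}\sigma_i$ and expanding, one finds
\[
c(x_1,\dots,x_l)=\sum_{i=1}^{l}\sigma_i|x_i|^2-\frac1\Sigma\Bigl|\sum_{i=1}^{l}\sigma_i x_i\Bigr|^2=\frac1\Sigma\sum_{i<j}\sigma_i\sigma_j|x_i-x_j|^2 ,
\]
so $c$ is a smooth cost which, up to the positive factor $1/\Sigma$, is the weighted pairwise quadratic cost $\sum_{i<j}\sigma_i\sigma_j|x_i-x_j|^2$; since the first sum is constant over $\Pi(\mu_1,\dots,\mu_l)$, minimizing $\int c\,d\pi$ is the same as maximizing $\int\bigl|\sum_i\sigma_i x_i\bigr|^2\,d\pi$. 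The computation that drives the argument is that of the first-variable derivative: using $\partial\bar x/\partial x_1=(\sigma_1/\Sigma)I$ and $\sum_i\sigma_i(x_i-\bar x)=0$, every term collapses and
\[
D_{x_1}c(x_1,\dots,x_l)=2\sigma_1(x_1-\bar x),
\]
whence $D^2_{x_1x_1}c=\tfrac{2\sigma_1(\Sigma-\sigma_1)}{\Sigma}I$ and $D^2_{x_1x_j}c=-\tfrac{2\sigma_1\sigma_j}{\Sigma}I$ for $j\ne 1$; all of these are invertible because $0<\sigma_1<\Sigma$.

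For the first assertion I would run the Kantorovich duality -- legitimate since the $\mu_i$ are compactly supported, exactly as in the computation preceding the proposition -- to produce bounded potentials $u_1,\dots,u_l$ attaining the optimal value, so that the support of any optimal plan lies in the contact set $\{\sum_i u_i(x_i)=c(x_1,\dots,x_l)\}$. Invertibility of $x_1\mapsto D_{x_1}c$ for fixed $x_2,\dots,x_l$ is the twist (Spence--Mirrlees) condition in the first variable; by Pass's theorem, once one upgrades this to the assertion that $c$ is \emph{twisted on splitting sets}, absolute continuity of $\mu_1$ forces the optimal plan to be unique and induced by a map from $\mu_1$. I would verify the splitting-set condition from the same identity: on a splitting set with potentials $u_i$, at $\mu_1$-almost every point $u_1$ is differentiable and $Du_1(x_1)=2\sigma_1(x_1-\bar x)$, so the barycenter $\bar x$ -- equivalently $\sum_j\sigma_j x_j$ -- is determined by $x_1$; combining this with the analogous relations $Du_j(x_j)=2\sigma_j(x_j-\bar x)$ and the monotonicity of the potentials, one recovers each $x_j$ as a function of $x_1$ alone, which is precisely the splitting-set condition.

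For the second assertion, dispose first of the degenerate case: if $W_2(\mu_j,\nu_0)=0$ for some $j$ then $\nu_0=\mu_j$ and $F(\nu_0)=F(\mu_j)=\mu_j=\nu_0$, so assume $W_2(\mu_i,\nu_0)>0$ for all $i$ and set $\sigma_i=1/W_2(\nu_0,\mu_i)$. By the reduction preceding the proposition, the first assertion makes $\nu_0$ the \emph{unique} minimizer of $\Psi(\mu)=\sum_{i=1}^{l}\sigma_i W_2^2(\mu_i,\mu)$, hence the unique solution of the system $W_2(\mu_i,\nu)=W_2(\mu_i,\nu_0)$, $1\le i\le l$. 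Now let $F$ be non-expansive with $F(\mu_i)=\mu_i$ for every $i$. Then $W_2(\mu_i,F(\nu_0))=W_2(F(\mu_i),F(\nu_0))\le W_2(\mu_i,\nu_0)$ for each $i$, so
\[
\Phi(F(\nu_0))=\sum_{i=1}^{l}W_2(\mu_i,F(\nu_0))\le\sum_{i=1}^{l}W_2(\mu_i,\nu_0)=\Phi(\nu_0)=\min_{P_2(\R^n)}\Phi ,
\]
and since $\nu_0$, being the free vertex of a minimal star, minimizes $\Phi$, equality holds throughout; this forces $W_2(\mu_i,F(\nu_0))=W_2(\mu_i,\nu_0)$ for every $i$. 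Thus $F(\nu_0)$ solves the system above, and by uniqueness $F(\nu_0)=\nu_0$.

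The real difficulty is entirely in the first assertion, and more precisely in passing from the elementary first-variable twist condition to Pass's genuinely stronger ``twisted on splitting sets'' condition: what is needed is uniqueness of the full optimal plan on $(\R^n)^l$, not merely of its barycenter pushforward $\nu_0$, and this requires controlling the geometry of the contact and splitting sets at points where the potentials are not differentiable, rather than only on the full-measure set where they are. The simple identity $D_{x_1}c=2\sigma_1(x_1-\bar x)$ is exactly what reduces this control to the monotonicity of the Kantorovich potentials for the ordinary quadratic cost, and thus makes the verification feasible.
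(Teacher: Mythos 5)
Your deduction of the second assertion (non-expansive maps fix $\nu_0$) is correct and fills in exactly the argument the paper sketches in the discussion preceding the proposition: each $W_2(\mu_i,F(\nu_0))\le W_2(\mu_i,\nu_0)$, minimality of $\Phi$ forces equality termwise, and uniqueness of the minimizer of $\Psi$ (which follows from uniqueness of the multi-marginal plan via the barycenter map $f$) pins down $F(\nu_0)=\nu_0$. Your computation $D_{x_1}c=2\sigma_1(x_1-\bar x)$ also agrees with the paper's explicit formula for $D_{x_i}c$.

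The gap is in the first assertion, which you yourself flag as ``the real difficulty'' and then do not close. The theorem of Pass \cite{Pass10} that the paper actually invokes does not run through a ``twist on splitting sets'' condition (that is a later criterion of Kim and Pass, not the one cited here); it requires verifying four concrete hypotheses, of which the substantive one is the negativity of the tensor $T_{y,y(2),\dots,y(m-1)}=S_y+H_{y,y(2),\dots,y(m-1)}$ built from the mixed second derivatives $D^2_{x_ix_j}c$. Your proposal never computes this. The paper's proof is precisely that computation: $D^2_{x_ix_j}c$ is the constant tensor $-\tfrac{2\sigma_i\sigma_j}{\sum_k\sigma_k}\sum_\alpha dx_i^\alpha\otimes dx_j^\alpha$ for $i\ne j$, so $H\equiv0$ (the Hessians are constant), and the two sums in $S_y$ combine to give $T=-\tfrac{2}{\sum_k\sigma_k}\sum_{i=2}^{l-1}\sigma_i^2\sum_\alpha dx_i^\alpha\otimes dx_i^\alpha<0$, which together with the first-order twist and non-degeneracy (both of which you do establish) yields uniqueness. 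By contrast, your route requires upgrading the pointwise twist to a statement on splitting sets, and the sketch you give---``$Du_j(x_j)=2\sigma_j(x_j-\bar x)$ \dots one recovers each $x_j$ as a function of $x_1$ alone''---assumes differentiability of the potentials $u_j$ at every point of the splitting set, which is exactly what fails in general and what the splitting-set condition is designed to circumvent; as written this step is asserted rather than proved. Either carry out the $T<0$ computation to match the cited theorem, or give a genuine verification of the splitting-set condition (as Kim and Pass do for the unweighted Gangbo--\'Swi\k{e}ch cost). Separately, note that the $T<0$ criterion is vacuous for $l=2$, which is why the paper handles that case by Brenier's theorem; your proposal should do the same.
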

\begin{proof}
For $l=2$, the result is simply geodesic uniqueness and follows from Brenier's theorem. Assume $l\ge3$.

The following conditions for infinitesimal cost functions $c:M_1\times\dots\times M_m\to\R$ on precompact\footnote{Precisely, each $M_i$ is smoothly embedded in some manifold $N_i$, in which $\clos{M_i}$ is compact.} Riemannian manifolds $M_1,\dots,M_m$ of dimension $n$ were used by Pass  \cite{Pass10} to obtain uniqueness of solutions to multi-marginal problems:
\begin{enumerate}
\item $c\in C^2(\clos{M_1}\times\dots\times\clos{M_m})$.
\item $c$ is $(1,m)$-twisted, meaning the map $x_m\mapsto D_{x_1}c(x_1,\dots,x_m)$ from $M_m$ to $T_{x_1}^* M_1$ is injective for all fixed $x_k$, $k\neq m$.
\item $c$ is $(1,m)$-non-degenerate, meaning $D_{x_1x_m}^2 c(x_1,\dots,x_m):T_{x_m}M_m\to T_{x_1}^* M_1$ is injective for all $(x_1,\dots,x_m)$.
\item For all choices of $y=(y_1,\dots,y_m)\in M_1\times\dots\times M_m$ and of $y(i)=(y_1(i),\dots,y_i(m))\in \clos{M_1}\times\dots\times\clos{M_m}$ such that $y_i(i)=y_i$ for $i=2,\dots,m-1$, we have
\[
T_{y,y(2),y(3),\dots,y(m-1)}<0,
\]
for $T_{y,y(2),y(3),\dots,y(m-1)}$ defined below.
\end{enumerate}
\begin{theorem}[\cite{Pass10}]\label{Pass}
If $M_1,\dots,M_m$ and $c$ satisfy the above conditions and $\mu_1\in P(M_1)$ does not charge sets of Hausdorff dimension less than or equal to $n-1$, then the multi-marginal Kantorovich and Monge problems have unique solutions for any $\mu_2\in P(M_2),\dots,\mu_m\in P(M_m)$.
\end{theorem}

We now define $T_{y,y(2),y(3),\dots,y(m-1)}$:
\begin{equation*}
\begin{split}
S_y:=&-\sum_{j=2}^{m-1}\sum_{i=2, i\neq j}^{m-1}D_{x_ix_j}^2c(y)\\
&\quad+\sum_{i,j=2}^{m-1}(D_{x_ix_m}^2c(D_{x_1x_m}^2c)^{-1}D_{x_1x_j}^2c)(y),\\
H_{y,y(2),y(3),\dots,y(m-1)}:=&\sum_{i=1}^{m-1}(\Hess_{x_i}c(y(i))-\Hess_{x_i}c(y)),\\
T_{y,y(2),y(3),\dots,y(m-1)}:=&S_y+H_{y,y(2),y(3),\dots,y(m-1)}.
\end{split}
\end{equation*}

For our infinitesimal cost $c(x_1,\dots,x_l)=\sum_{i=1}^l \sigma_i |x_i-\bar x|^2$, we may compute directly in natural coordinates and find that at $(p_1,\dots,p_l)\in \R^{ln}$,
\[
D_{x_i}c(p_1,\dots,p_l)
=\sum_{\alpha=1}^n \left[2\left(\sigma_i-\frac{\sigma_i^2}{\sum_{k=1}^l \sigma_k}\right) p_i^\alpha
- \sum_{j\neq i} \frac{2\sigma_i\sigma_j}{\sum_{k=1}^l \sigma_k} p_j^\alpha\right]dx_i^\alpha,
\]
and
\[
D_{x_ix_j}c(p_1,\dots,p_l)=
\begin{cases}
\sum_\alpha 2\left(\sigma_i-\dfrac{\sigma_i^2}{\sum_{k=1}^l \sigma_k}\right)  dx_i^\alpha\tensor dx_i^\alpha & \text{if $i=j$,}\\
\sum_\alpha \dfrac{-2\sigma_i\sigma_j}{\sum_{k=1}^l \sigma_k}  dx_i^\alpha\tensor dx_j^\alpha & \text{if $i\neq j$.}
\end{cases}
\]
Therefore,
$c$ is $C^2$, $(1,l)$-twisted, and $(1,l)$-non-degenerate. $H\equiv 0$, so
\[
T=\sum_{i=2}^{l-1}
\frac{-2\sigma_i^2}{\sum_{k=1}^l \sigma_k} \sum_\alpha dx_i^\alpha\tensor dx_i^\alpha
<0.\qedhere
\]
\end{proof}

The condition $\mu_1\in P_2^{ac}(\R^d)$ cannot be removed, as seen by considering 
\[
\mu_1=\frac12 \delta_{(1,0)}+\frac12 \delta_{(-1,0)},\quad
\mu_2=\frac12 \delta_{(0,1)}+\frac12 \delta_{(0,-1)},
\]
and the isometry $F$ on $P_2(\R^n)$ induced by reflection about the $x$-axis. Only one of the geodesics from $\mu_1$ to $\mu_2$ is fixed by $F$.

Most of the preceding discussion may be generalized to an arbitrary graph $G$, where $\bar x$ is replaced by the collection of points in $\R^n$ solving the appropriately weighted minimization problem for the sum of squared distances. However, we can have $T\not<0$ in this case, causing the end of the proof of Proposition \ref{absmaxprinc} to fail. In particular, if we take a minimal network as in Figure \ref{figh}, computations similar to our previous work show that $T<0$ if and only if $a>\sqrt2b$. Switching the labeling of $\mu_2$ and $\mu_4$ only makes the inequality worse, with $T<0$ if and only if $a>4b$.

\begin{figure}
\caption{A $P_2(\R^n)$ minimal network with boundary data $\mu_1,\dots,\mu_4$ and lengths $a,b$ as labeled.}
\label{figh}
\setlength{\unitlength}{3947sp}%
\begingroup\makeatletter\ifx\SetFigFont\undefined%
\gdef\SetFigFont#1#2#3#4#5{%
  \reset@font\fontsize{#1}{#2pt}%
  \fontfamily{#3}\fontseries{#4}\fontshape{#5}%
  \selectfont}%
\fi\endgroup%
\begin{picture}(3492,1942)(496,-1320)
\thinlines
{\color[rgb]{0,0,0}\put(1591,-466){\line( 1, 0){1350}}
}%
{\color[rgb]{0,0,0}\multiput(2948,-459)(8.00000,8.00000){2}{\makebox(1.6667,11.6667){\tiny.}}
\multiput(2956,-451)(6.50000,6.50000){3}{\makebox(1.6667,11.6667){\tiny.}}
\multiput(2969,-438)(8.75000,8.75000){3}{\makebox(1.6667,11.6667){\tiny.}}
\multiput(2987,-421)(7.00000,7.00000){4}{\makebox(1.6667,11.6667){\tiny.}}
\multiput(3008,-400)(6.25000,6.25000){5}{\makebox(1.6667,11.6667){\tiny.}}
\multiput(3034,-376)(6.87500,6.87500){5}{\makebox(1.6667,11.6667){\tiny.}}
\multiput(3062,-349)(7.25000,7.25000){5}{\makebox(1.6667,11.6667){\tiny.}}
\multiput(3091,-320)(5.90000,5.90000){6}{\makebox(1.6667,11.6667){\tiny.}}
\multiput(3121,-291)(5.90000,5.90000){6}{\makebox(1.6667,11.6667){\tiny.}}
\multiput(3151,-262)(7.74590,6.45492){5}{\makebox(1.6667,11.6667){\tiny.}}
\multiput(3181,-235)(6.87500,6.87500){5}{\makebox(1.6667,11.6667){\tiny.}}
\multiput(3209,-208)(6.50000,6.50000){5}{\makebox(1.6667,11.6667){\tiny.}}
\multiput(3236,-183)(6.25000,6.25000){5}{\makebox(1.6667,11.6667){\tiny.}}
\multiput(3262,-159)(8.16393,6.80328){4}{\makebox(1.6667,11.6667){\tiny.}}
\multiput(3286,-138)(7.16667,7.16667){4}{\makebox(1.6667,11.6667){\tiny.}}
\multiput(3308,-117)(7.27870,6.06558){4}{\makebox(1.6667,11.6667){\tiny.}}
\multiput(3330,-99)(7.08197,5.90164){4}{\makebox(1.6667,11.6667){\tiny.}}
\multiput(3351,-81)(6.72130,5.60108){4}{\makebox(1.6667,11.6667){\tiny.}}
\multiput(3371,-64)(9.45120,7.56096){3}{\makebox(1.6667,11.6667){\tiny.}}
\multiput(3390,-49)(9.54100,7.95083){3}{\makebox(1.6667,11.6667){\tiny.}}
\multiput(3409,-33)(10.15385,6.76923){3}{\makebox(1.6667,11.6667){\tiny.}}
\multiput(3429,-19)(9.45120,7.56096){3}{\makebox(1.6667,11.6667){\tiny.}}
\multiput(3448, -4)(9.44000,7.08000){3}{\makebox(1.6667,11.6667){\tiny.}}
\multiput(3467, 10)(10.15385,6.76923){3}{\makebox(1.6667,11.6667){\tiny.}}
\multiput(3487, 24)(6.88000,5.16000){4}{\makebox(1.6667,11.6667){\tiny.}}
\multiput(3508, 39)(7.00000,4.66667){4}{\makebox(1.6667,11.6667){\tiny.}}
\multiput(3529, 53)(7.61540,5.07693){4}{\makebox(1.6667,11.6667){\tiny.}}
\multiput(3552, 68)(7.61540,5.07693){4}{\makebox(1.6667,11.6667){\tiny.}}
\multiput(3575, 83)(8.72550,5.23530){4}{\makebox(1.6667,11.6667){\tiny.}}
\multiput(3601, 99)(8.97060,5.38236){4}{\makebox(1.6667,11.6667){\tiny.}}
\multiput(3628,115)(7.31618,4.38971){5}{\makebox(1.6667,11.6667){\tiny.}}
\multiput(3657,133)(7.50000,4.50000){5}{\makebox(1.6667,11.6667){\tiny.}}
\multiput(3687,151)(7.68382,4.61029){5}{\makebox(1.6667,11.6667){\tiny.}}
\multiput(3718,169)(8.16178,4.89707){5}{\makebox(1.6667,11.6667){\tiny.}}
\multiput(3751,188)(8.16178,4.89707){5}{\makebox(1.6667,11.6667){\tiny.}}
\multiput(3784,207)(7.68382,4.61029){5}{\makebox(1.6667,11.6667){\tiny.}}
\multiput(3815,225)(7.90000,3.95000){5}{\makebox(1.6667,11.6667){\tiny.}}
\multiput(3846,242)(8.97060,5.38236){4}{\makebox(1.6667,11.6667){\tiny.}}
\multiput(3873,258)(7.54903,4.52942){4}{\makebox(1.6667,11.6667){\tiny.}}
\multiput(3896,271)(9.41175,5.64705){3}{\makebox(1.6667,11.6667){\tiny.}}
\multiput(3915,282)(13.82350,8.29410){2}{\makebox(1.6667,11.6667){\tiny.}}
\multiput(3929,290)(10.00000,5.00000){2}{\makebox(1.6667,11.6667){\tiny.}}
}%
{\color[rgb]{0,0,0}\multiput(2961,-490)(6.00000,-12.00000){2}{\makebox(1.6667,11.6667){\tiny.}}
\multiput(2967,-502)(4.72060,-7.86767){3}{\makebox(1.6667,11.6667){\tiny.}}
\multiput(2976,-518)(4.38237,-7.30394){4}{\makebox(1.6667,11.6667){\tiny.}}
\multiput(2989,-540)(5.29413,-8.82356){4}{\makebox(1.6667,11.6667){\tiny.}}
\multiput(3004,-567)(4.50000,-7.50000){5}{\makebox(1.6667,11.6667){\tiny.}}
\multiput(3022,-597)(4.78677,-7.97796){5}{\makebox(1.6667,11.6667){\tiny.}}
\multiput(3041,-629)(4.96323,-8.27204){5}{\makebox(1.6667,11.6667){\tiny.}}
\multiput(3061,-662)(5.42307,-8.13461){5}{\makebox(1.6667,11.6667){\tiny.}}
\multiput(3082,-695)(5.11538,-7.67306){5}{\makebox(1.6667,11.6667){\tiny.}}
\multiput(3102,-726)(4.88462,-7.32694){5}{\makebox(1.6667,11.6667){\tiny.}}
\multiput(3122,-755)(6.56000,-8.74667){4}{\makebox(1.6667,11.6667){\tiny.}}
\multiput(3142,-781)(6.28000,-8.37333){4}{\makebox(1.6667,11.6667){\tiny.}}
\multiput(3161,-806)(6.20220,-7.44264){4}{\makebox(1.6667,11.6667){\tiny.}}
\multiput(3180,-828)(6.33333,-6.33333){4}{\makebox(1.6667,11.6667){\tiny.}}
\multiput(3199,-847)(6.00000,-6.00000){4}{\makebox(1.6667,11.6667){\tiny.}}
\multiput(3217,-865)(9.45120,-7.56096){3}{\makebox(1.6667,11.6667){\tiny.}}
\multiput(3236,-880)(9.44000,-7.08000){3}{\makebox(1.6667,11.6667){\tiny.}}
\multiput(3255,-894)(9.92310,-6.61540){3}{\makebox(1.6667,11.6667){\tiny.}}
\multiput(3275,-907)(10.36765,-6.22059){3}{\makebox(1.6667,11.6667){\tiny.}}
\multiput(3296,-919)(9.13795,-3.65518){3}{\makebox(1.6667,11.6667){\tiny.}}
\multiput(3314,-927)(9.80000,-4.90000){3}{\makebox(1.6667,11.6667){\tiny.}}
\multiput(3334,-936)(10.05000,-3.35000){3}{\makebox(1.6667,11.6667){\tiny.}}
\multiput(3354,-943)(10.50000,-3.50000){3}{\makebox(1.6667,11.6667){\tiny.}}
\multiput(3375,-950)(11.52940,-2.88235){3}{\makebox(1.6667,11.6667){\tiny.}}
\multiput(3398,-956)(8.31373,-2.07843){4}{\makebox(1.6667,11.6667){\tiny.}}
\multiput(3423,-962)(8.97437,-1.79487){4}{\makebox(1.6667,11.6667){\tiny.}}
\multiput(3450,-967)(9.67567,-1.61261){4}{\makebox(1.6667,11.6667){\tiny.}}
\multiput(3479,-972)(10.32433,-1.72072){4}{\makebox(1.6667,11.6667){\tiny.}}
\multiput(3510,-977)(10.91893,-1.81982){4}{\makebox(1.6667,11.6667){\tiny.}}
\multiput(3543,-981)(8.87838,-1.47973){5}{\makebox(1.6667,11.6667){\tiny.}}
\multiput(3579,-984)(9.40540,-1.56757){5}{\makebox(1.6667,11.6667){\tiny.}}
\put(3617,-988){\line( 1, 0){ 40}}
\put(3657,-991){\line( 1, 0){ 41}}
\put(3698,-994){\line( 1, 0){ 42}}
\put(3740,-996){\line( 1, 0){ 42}}
\put(3782,-999){\line( 1, 0){ 39}}
\put(3821,-1000){\line( 1, 0){ 37}}
\put(3858,-1002){\line( 1, 0){ 33}}
\put(3891,-1003){\line( 1, 0){ 28}}
\put(3919,-1004){\line( 1, 0){ 22}}
\put(3941,-1005){\line( 1, 0){ 16}}
\multiput(3957,-1005)(10.86490,-1.81082){2}{\makebox(1.6667,11.6667){\tiny.}}
\put(3968,-1006){\line( 1, 0){  5}}
\put(3973,-1006){\line( 1, 0){  3}}
}%
{\color[rgb]{0,0,0}\multiput(1573,-456)(-4.82760,12.06900){2}{\makebox(1.6667,11.6667){\tiny.}}
\multiput(1568,-444)(-2.85000,8.55000){3}{\makebox(1.6667,11.6667){\tiny.}}
\multiput(1562,-427)(-4.58620,11.46550){3}{\makebox(1.6667,11.6667){\tiny.}}
\multiput(1553,-404)(-3.23333,9.70000){4}{\makebox(1.6667,11.6667){\tiny.}}
\multiput(1543,-375)(-3.17243,7.93106){5}{\makebox(1.6667,11.6667){\tiny.}}
\multiput(1531,-343)(-3.37930,8.44825){5}{\makebox(1.6667,11.6667){\tiny.}}
\multiput(1518,-309)(-3.50000,8.75000){5}{\makebox(1.6667,11.6667){\tiny.}}
\multiput(1504,-274)(-3.50000,8.75000){5}{\makebox(1.6667,11.6667){\tiny.}}
\multiput(1490,-239)(-3.37930,8.44825){5}{\makebox(1.6667,11.6667){\tiny.}}
\multiput(1477,-205)(-3.24138,8.10344){5}{\makebox(1.6667,11.6667){\tiny.}}
\multiput(1463,-173)(-3.93103,9.82758){4}{\makebox(1.6667,11.6667){\tiny.}}
\multiput(1450,-144)(-3.77010,9.42525){4}{\makebox(1.6667,11.6667){\tiny.}}
\multiput(1438,-116)(-4.20000,8.40000){4}{\makebox(1.6667,11.6667){\tiny.}}
\multiput(1425,-91)(-3.80000,7.60000){4}{\makebox(1.6667,11.6667){\tiny.}}
\multiput(1414,-68)(-3.73333,7.46667){4}{\makebox(1.6667,11.6667){\tiny.}}
\multiput(1402,-46)(-5.77940,9.63233){3}{\makebox(1.6667,11.6667){\tiny.}}
\multiput(1390,-27)(-5.77940,9.63233){3}{\makebox(1.6667,11.6667){\tiny.}}
\multiput(1378, -8)(-6.00000,9.00000){3}{\makebox(1.6667,11.6667){\tiny.}}
\multiput(1366, 10)(-6.00000,8.00000){3}{\makebox(1.6667,11.6667){\tiny.}}
\multiput(1354, 26)(-7.04920,8.45904){3}{\makebox(1.6667,11.6667){\tiny.}}
\multiput(1340, 43)(-6.43900,8.04875){3}{\makebox(1.6667,11.6667){\tiny.}}
\multiput(1327, 59)(-7.50000,7.50000){3}{\makebox(1.6667,11.6667){\tiny.}}
\multiput(1312, 74)(-7.75000,7.75000){3}{\makebox(1.6667,11.6667){\tiny.}}
\multiput(1297, 90)(-8.70490,7.25408){3}{\makebox(1.6667,11.6667){\tiny.}}
\multiput(1280,105)(-9.54100,7.95083){3}{\makebox(1.6667,11.6667){\tiny.}}
\multiput(1261,121)(-6.72130,5.60108){4}{\makebox(1.6667,11.6667){\tiny.}}
\multiput(1241,138)(-7.41333,5.56000){4}{\makebox(1.6667,11.6667){\tiny.}}
\multiput(1219,155)(-7.62667,5.72000){4}{\makebox(1.6667,11.6667){\tiny.}}
\multiput(1196,172)(-8.58667,6.44000){4}{\makebox(1.6667,11.6667){\tiny.}}
\multiput(1170,191)(-9.15383,6.10256){4}{\makebox(1.6667,11.6667){\tiny.}}
\multiput(1143,210)(-7.03845,4.69230){5}{\makebox(1.6667,11.6667){\tiny.}}
\multiput(1115,229)(-7.03845,4.69230){5}{\makebox(1.6667,11.6667){\tiny.}}
\multiput(1087,248)(-9.00000,6.00000){4}{\makebox(1.6667,11.6667){\tiny.}}
\multiput(1060,266)(-8.61540,5.74360){4}{\makebox(1.6667,11.6667){\tiny.}}
\multiput(1034,283)(-7.69607,4.61764){4}{\makebox(1.6667,11.6667){\tiny.}}
\multiput(1011,297)(-9.63235,5.77941){3}{\makebox(1.6667,11.6667){\tiny.}}
\multiput(992,309)(-13.84620,9.23080){2}{\makebox(1.6667,11.6667){\tiny.}}
\multiput(978,318)(-9.00000,6.00000){2}{\makebox(1.6667,11.6667){\tiny.}}
}%
{\color[rgb]{0,0,0}\multiput(1570,-490)(-8.00000,-10.00000){2}{\makebox(1.6667,11.6667){\tiny.}}
\multiput(1562,-500)(-6.00000,-7.50000){3}{\makebox(1.6667,11.6667){\tiny.}}
\multiput(1550,-515)(-5.28000,-7.04000){4}{\makebox(1.6667,11.6667){\tiny.}}
\multiput(1534,-536)(-6.28000,-8.37333){4}{\makebox(1.6667,11.6667){\tiny.}}
\multiput(1515,-561)(-5.43903,-6.79878){5}{\makebox(1.6667,11.6667){\tiny.}}
\multiput(1493,-588)(-5.67000,-7.56000){5}{\makebox(1.6667,11.6667){\tiny.}}
\multiput(1470,-618)(-6.21950,-7.77438){5}{\makebox(1.6667,11.6667){\tiny.}}
\multiput(1445,-649)(-6.25000,-7.50000){5}{\makebox(1.6667,11.6667){\tiny.}}
\multiput(1420,-679)(-6.02460,-7.22952){5}{\makebox(1.6667,11.6667){\tiny.}}
\multiput(1396,-708)(-5.67622,-6.81147){5}{\makebox(1.6667,11.6667){\tiny.}}
\multiput(1373,-735)(-7.10383,-8.52460){4}{\makebox(1.6667,11.6667){\tiny.}}
\multiput(1351,-760)(-6.80327,-8.16392){4}{\makebox(1.6667,11.6667){\tiny.}}
\multiput(1330,-784)(-6.66667,-6.66667){4}{\makebox(1.6667,11.6667){\tiny.}}
\multiput(1310,-804)(-6.33333,-6.33333){4}{\makebox(1.6667,11.6667){\tiny.}}
\multiput(1291,-823)(-6.16667,-6.16667){4}{\makebox(1.6667,11.6667){\tiny.}}
\multiput(1272,-841)(-9.00000,-7.50000){3}{\makebox(1.6667,11.6667){\tiny.}}
\multiput(1254,-856)(-8.70490,-7.25408){3}{\makebox(1.6667,11.6667){\tiny.}}
\multiput(1237,-871)(-8.88000,-6.66000){3}{\makebox(1.6667,11.6667){\tiny.}}
\multiput(1219,-884)(-9.00000,-6.00000){3}{\makebox(1.6667,11.6667){\tiny.}}
\multiput(1201,-896)(-9.00000,-6.00000){3}{\makebox(1.6667,11.6667){\tiny.}}
\multiput(1183,-908)(-9.60000,-4.80000){3}{\makebox(1.6667,11.6667){\tiny.}}
\multiput(1164,-918)(-9.77940,-5.86764){3}{\makebox(1.6667,11.6667){\tiny.}}
\multiput(1144,-929)(-9.80000,-4.90000){3}{\makebox(1.6667,11.6667){\tiny.}}
\multiput(1124,-938)(-7.75863,-3.10345){4}{\makebox(1.6667,11.6667){\tiny.}}
\multiput(1101,-948)(-11.46550,-4.58620){3}{\makebox(1.6667,11.6667){\tiny.}}
\multiput(1078,-957)(-8.60000,-2.86667){4}{\makebox(1.6667,11.6667){\tiny.}}
\multiput(1052,-965)(-9.00000,-3.00000){4}{\makebox(1.6667,11.6667){\tiny.}}
\multiput(1025,-974)(-9.90000,-3.30000){4}{\makebox(1.6667,11.6667){\tiny.}}
\multiput(995,-983)(-10.74510,-2.68628){4}{\makebox(1.6667,11.6667){\tiny.}}
\multiput(963,-992)(-8.29412,-2.07353){5}{\makebox(1.6667,11.6667){\tiny.}}
\multiput(930,-1001)(-8.70588,-2.17647){5}{\makebox(1.6667,11.6667){\tiny.}}
\multiput(895,-1009)(-8.70588,-2.17647){5}{\makebox(1.6667,11.6667){\tiny.}}
\multiput(860,-1017)(-8.47060,-2.11765){5}{\makebox(1.6667,11.6667){\tiny.}}
\multiput(826,-1025)(-10.27450,-2.56862){4}{\makebox(1.6667,11.6667){\tiny.}}
\multiput(795,-1032)(-9.35897,-1.87179){4}{\makebox(1.6667,11.6667){\tiny.}}
\multiput(767,-1038)(-11.53845,-2.30769){3}{\makebox(1.6667,11.6667){\tiny.}}
\multiput(744,-1043)(-8.47060,-2.11765){3}{\makebox(1.6667,11.6667){\tiny.}}
\multiput(727,-1047)(-12.00000,-2.00000){2}{\makebox(1.6667,11.6667){\tiny.}}
}%
\put(1426,119){\makebox(0,0)[lb]{\smash{{\SetFigFont{12}{14.4}{\rmdefault}{\mddefault}{\updefault}{\color[rgb]{0,0,0}a}%
}}}}
\put(3316,134){\makebox(0,0)[lb]{\smash{{\SetFigFont{12}{14.4}{\rmdefault}{\mddefault}{\updefault}{\color[rgb]{0,0,0}a}%
}}}}
\put(1021,-856){\makebox(0,0)[lb]{\smash{{\SetFigFont{12}{14.4}{\rmdefault}{\mddefault}{\updefault}{\color[rgb]{0,0,0}a}%
}}}}
\put(3511,-826){\makebox(0,0)[lb]{\smash{{\SetFigFont{12}{14.4}{\rmdefault}{\mddefault}{\updefault}{\color[rgb]{0,0,0}a}%
}}}}
\put(2176,-376){\makebox(0,0)[lb]{\smash{{\SetFigFont{12}{14.4}{\rmdefault}{\mddefault}{\updefault}{\color[rgb]{0,0,0}b}%
}}}}
\put(766,409){\makebox(0,0)[lb]{\smash{{\SetFigFont{12}{14.4}{\rmdefault}{\mddefault}{\updefault}{\color[rgb]{0,0,0}$\mu_1$}%
}}}}
\put(511,-1121){\makebox(0,0)[lb]{\smash{{\SetFigFont{12}{14.4}{\rmdefault}{\mddefault}{\updefault}{\color[rgb]{0,0,0}$\mu_2$}%
}}}}
\put(3896,409){\makebox(0,0)[lb]{\smash{{\SetFigFont{12}{14.4}{\rmdefault}{\mddefault}{\updefault}{\color[rgb]{0,0,0}$\mu_3$}%
}}}}
\put(4021,-1046){\makebox(0,0)[lb]{\smash{{\SetFigFont{12}{14.4}{\rmdefault}{\mddefault}{\updefault}{\color[rgb]{0,0,0}$\mu_4$}%
}}}}
\put(1531,-646){\makebox(0,0)[lb]{\smash{{\SetFigFont{12}{14.4}{\rmdefault}{\mddefault}{\updefault}{\color[rgb]{0,0,0}$\nu_1$}%
}}}}
\put(2831,-646){\makebox(0,0)[lb]{\smash{{\SetFigFont{12}{14.4}{\rmdefault}{\mddefault}{\updefault}{\color[rgb]{0,0,0}$\nu_2$}%
}}}}
\end{picture}%
\end{figure}

\appendix
\section{$L^\infty$ regularity and a stronger maximum principle}

Shortly after the initial posting of this paper, the author was made aware of a recent preprint of Agueh and Carlier \cite{AC10} in which the problem of minimizing 
\[
\Psi(\mu)=\sum_{i=1}^l \sigma_i W_2^2(\mu_i,\mu)
\]
over $\mu\in P_2(\R^n)$ is considered. In this appendix, we discuss briefly the applications of their work to minimal network problems. Specifically, we obtain $L^\infty$ estimates and derive a stronger maximum principle for minimal networks in $P_2(\R^n)$.

Building on the work of Gangbo and 
\`{S}wi\c{e}ch 
\cite{GS98}, Agueh and Carlier obtain the following $L^\infty$ regularity of solutions.

\begin{theorem}[Theorem 5.1 and Remark 5.2 of \cite{AC10}]\label{AC}
Let $\mu_1,\dots,\mu_l\in P_2(\R^n)$ and let $\sigma_1,\dots,\sigma_l$ be positive reals summing to $1/2$. Assume $\mu_1\in L^\infty$, i.e. $\mu_1=\rho_1 \mathcal{L}^n$ with $\rho_1\in L^\infty$.
If $\Psi(\nu)=\min_{\mu} \Psi(\mu)$, then 
\[
\|\nu\|_{L^\infty}
\le \frac1{\sigma_1^n} \|\mu_1\|_{L^\infty}.
\]
\end{theorem}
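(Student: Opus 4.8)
The plan is to read off the $L^\infty$ bound from the Euler--Lagrange equation satisfied by the minimizer $\nu$, together with the Monge--Amp\`ere change of variables; the only genuinely delicate input is the a priori regularity of $\nu$ itself, which I would import from the analysis of the associated multi-marginal quadratic problem (the reformulation of $\Psi$ already exploited in Section~\ref{armmp}). Concretely: since $\mu_1\in L^\infty\subset P_2^{ac}(\R^n)$, the theory of Gangbo and \`{S}wi\c{e}ch \cite{GS98} on which \cite{AC10} builds shows that $\Psi$ has a unique minimizer $\nu$ and that $\nu$ is absolutely continuous; write $\nu=\rho\,\mathcal{L}^n$. By Brenier's theorem, for each $i$ there is a convex function $\psi_i$ whose gradient $T_i:=\nabla\psi_i$ is the ($\nu$-a.e.\ unique) optimal transport map from $\nu$ to $\mu_i$, with $\int|T_i(x)-x|^2\,d\nu(x)=W_2^2(\mu_i,\nu)$.

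\textbf{Euler--Lagrange equation.} I would perturb $\nu$ to $\nu_t:=(\id+t\xi)_\#\nu$ for an arbitrary bounded vector field $\xi$ and compare $W_2^2(\mu_i,\nu_t)$ with the cost of the competitor plan $(T_i,\id+t\xi)_\#\nu\in\Pi(\mu_i,\nu_t)$, obtaining
\[
W_2^2(\mu_i,\nu_t)\le W_2^2(\mu_i,\nu)-2t\int(T_i(x)-x)\cdot\xi(x)\,d\nu(x)+t^2\|\xi\|_{L^2(\nu)}^2
\]
(the easy half of the first-variation formula for $W_2^2$, cf.\ \cite{AGS05}). Summing against the weights $\sigma_i$ and using that $\nu$ minimizes $\Psi$, the linear-in-$t$ term must vanish (comparing $t\downarrow 0$ with $t\uparrow 0$); since $\xi$ was arbitrary this gives
\[
\sum_{i=1}^l\sigma_i\nabla\psi_i(x)=\left(\sum_{i=1}^l\sigma_i\right)x\qquad\text{for $\nu$-a.e.\ }x.
\]

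\textbf{Jacobian bound and Monge--Amp\`ere.} Each $\psi_i$ is convex, so by Alexandrov's theorem its Hessian $D^2\psi_i$ exists $\mathcal{L}^n$-a.e.\ --- hence $\nu$-a.e., as $\nu\ll\mathcal{L}^n$ --- and is positive semidefinite. Differentiating the Euler--Lagrange equation in this almost-everywhere sense gives $\sigma_1 D^2\psi_1\preceq\big(\sum_i\sigma_i\big)I$, so $0\preceq D^2\psi_1\preceq\sigma_1^{-1}\big(\sum_i\sigma_i\big)I$ and hence $\det D^2\psi_1(x)\le\big(\sigma_1^{-1}\sum_i\sigma_i\big)^n$ for $\nu$-a.e.\ $x$. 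On the other hand $(\nabla\psi_1)_\#(\rho\,\mathcal{L}^n)=\rho_1\,\mathcal{L}^n$, and since both measures are absolutely continuous the Monge--Amp\`ere change-of-variables formula for Brenier maps gives $\rho(x)=\rho_1(\nabla\psi_1(x))\,\det D^2\psi_1(x)$ for $\nu$-a.e.\ $x$. Combining these, and using $\sum_i\sigma_i=1/2\le 1$,
\[
\rho(x)\le\|\rho_1\|_{L^\infty}\left(\frac{1}{\sigma_1}\sum_{i=1}^l\sigma_i\right)^n\le\frac{1}{\sigma_1^n}\,\|\mu_1\|_{L^\infty}\qquad\text{for a.e.\ }x,
\]
which is the asserted estimate (in fact one gets the sharper bound with $\sigma_1$ replaced by $2\sigma_1$).

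\textbf{Main obstacle.} Steps two and three are soft optimal-transport calculus \emph{once one knows $\nu\ll\mathcal{L}^n$}; the real content is precisely that a priori regularity, and this is where the Gangbo--\`{S}wi\c{e}ch and Agueh--Carlier analysis does the work. Roughly, one passes to the multi-marginal problem with cost $\sum_i\sigma_i|x_i-\bar x|^2$ as in Section~\ref{armmp}; because this cost is twisted (condition (2) preceding Theorem~\ref{Pass}, verified in the proof of Proposition~\ref{absmaxprinc}) and $\mu_1$ does not charge $(n-1)$-dimensional sets, an optimal plan is concentrated on the graph of a map over the first marginal, from which one reads off that $\nu$ is absolutely continuous --- indeed with bounded density, via essentially the Jacobian estimate above carried out inside that argument. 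A minor secondary point is upgrading the Jacobian identity from $\mathcal{L}^n$-a.e.\ to $\nu$-a.e., which is immediate here from $\nu\ll\mathcal{L}^n$ but would need care in more general settings.
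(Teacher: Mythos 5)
First, a point of comparison: the paper does not prove Theorem \ref{AC} at all --- it is quoted from Theorem 5.1 and Remark 5.2 of \cite{AC10} --- so the only meaningful benchmark is Agueh and Carlier's own argument. Your derivation is essentially correct \emph{given} that the minimizer $\nu$ is absolutely continuous: the Euler--Lagrange identity $\sum_i\sigma_i\nabla\psi_i=\bigl(\sum_i\sigma_i\bigr)\id$ $\nu$-a.e., the consequent bound $D^2\psi_1\preceq(2\sigma_1)^{-1}I$, and the Monge--Amp\`ere step are all sound (the one glossed point is that differentiating a $\nu$-a.e.\ identity between gradients requires the standard approximate-differentiability lemma for Alexandrov Hessians at Lebesgue density points of the full-measure set, which is available precisely because $\nu\ll\mathcal{L}^n$), and your sharper constant $(2\sigma_1)^{-n}$ is right under the normalization $\sum_i\sigma_i=1/2$. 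The difference from \cite{AC10} is that they do not split the argument this way. Writing the associated multi-marginal solution in Monge form over the absolutely continuous marginal $\mu_1$ (Gangbo--\`{S}wi\c{e}ch), the barycenter is $\nu=\bar T_\#\mu_1$ with $\bar T=\sum_i\lambda_iT_i$, $T_1=\id$, $\lambda_i=2\sigma_i$, and each $T_i$ the gradient of a convex function; hence $\bar T$ is the gradient of a $\lambda_1$-strongly convex function, so $|\bar T(x)-\bar T(y)|\ge\lambda_1|x-y|$, and for every Borel set $A$,
\[
\nu(A)=\mu_1\bigl(\bar T^{-1}(A)\bigr)\le\|\rho_1\|_{L^\infty}\,\mathcal{L}^n\bigl(\bar T^{-1}(A)\bigr)\le\|\rho_1\|_{L^\infty}\,\lambda_1^{-n}\,\mathcal{L}^n(A).
\]
This single inequality delivers absolute continuity of $\nu$ and the $L^\infty$ bound simultaneously, with no Euler--Lagrange equation, no Alexandrov second derivatives, and no Monge--Amp\`ere equation. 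Consequently your plan to ``import'' $\nu\ll\mathcal{L}^n$ from \cite{GS98} and \cite{AC10} is close to circular: absolute continuity of the barycenter when only one marginal is regular is obtained in \cite{AC10} exactly by the displayed Lipschitz-inverse estimate, i.e.\ by the theorem you are proving. Your closing paragraph concedes as much, so the proposal should be read as a correct second half welded to a first half that, to be non-circular, must be replaced by the strong-convexity argument above.
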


We may always rescale our coefficients so $\sum_i \sigma_i=1/2$, thereby obtaining such an estimate locally (on a star) for a minimal network. Moving inward from $\mu_i$ along stars, we obtain an interior $L^\infty$ estimate.

\begin{corollary}
Let $\Gamma:G\to P_2(\R^n)$ be a $G$-parameterized minimal network spanning $\mu_1,\dots,\mu_l\in P_2(\R^n)$. If $G$ has $k$ edges and the length of the image of each edge is in $[1/M,1/m]$, then for any $\nu\in \Gamma(G)$ with distance at least $1/M$ from a boundary vertex,
\[
\|\nu\|_{L^\infty}
\le \left(\frac{2kM}{m}\right)^{kn} \|\mu_1\|_{L^\infty}.
\]
\end{corollary}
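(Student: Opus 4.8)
The plan is to iterate the one-step regularity estimate of Theorem \ref{AC}, propagating an $L^\infty$ bound inward from $\mu_1$ through the free (Steiner) vertices of the network, and then to reach arbitrary points on the interiors of edges by a direct Jacobian bound on the displacement interpolation. We may assume $\mu_1=\rho_1\mathcal{L}^n$ with $\rho_1\in L^\infty$, since otherwise the right-hand side is $+\infty$. The edge-length hypothesis enters only as the two-sided estimate $W_2(\Gamma(w),\Gamma(w'))\in[1/M,1/m]$ for adjacent vertices.

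I would first establish the single-star bound: if $w$ is a free vertex with neighbors mapped to $p_1,\dots,p_d$, then minimality of $\Gamma$ forces $\Gamma(w)$ to minimize $\sum_i W_2(p_i,\cdot)$, hence, exactly as in Section \ref{armmp} (with $\sigma_i=1/W_2(\Gamma(w),p_i)$), also the quadratic functional $\sum_i\sigma_i W_2^2(p_i,\cdot)$, and therefore the rescaled functional $\sum_i\tilde\sigma_i W_2^2(p_i,\cdot)$ with $\tilde\sigma_i=\sigma_i/(2\sum_j\sigma_j)$ summing to $1/2$. If one neighbor, say $p_1$, lies in $L^\infty$, Theorem \ref{AC} gives $\|\Gamma(w)\|_{L^\infty}\le\tilde\sigma_1^{-n}\|p_1\|_{L^\infty}$; since each $\sigma_i\in[m,M]$ and $d\le k$, we get $\tilde\sigma_1\ge m/(2kM)$, so $\|\Gamma(w)\|_{L^\infty}\le(2kM/m)^n\|p_1\|_{L^\infty}$. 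Walking outward from the vertex $v_0$ with $\Gamma(v_0)=\mu_1$ and applying this at each free vertex — at each stage using the neighbor one edge closer to $v_0$, which is $\mu_1$ or a free vertex already handled, hence absolutely continuous with a known bound — I obtain $\|\Gamma(w)\|_{L^\infty}\le(2kM/m)^{qn}\|\mu_1\|_{L^\infty}$ for every free vertex $w$ at graph distance $q\le k-1$ from $v_0$.

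For the final step, let $\nu\in\Gamma(G)$ be at $W_2$-distance at least $1/M$ from every boundary vertex. Then $\nu$ lies on the image of an edge $e$, which I parametrize as a geodesic $s\mapsto\mu_s$, $s\in[0,L]$, $L\le 1/m$, starting from the endpoint $\Gamma(w)$ nearer $v_0$ in the graph (so $w$ is $v_0$ or an already-bounded free vertex). By Brenier's theorem $\mu_s=((1-s/L)\id+(s/L)\nabla\phi)_\#\Gamma(w)$ with $\nabla\phi$ the gradient of a convex function, so the transport Jacobian is at least $(1-s/L)^n$ a.e.\ and $\|\mu_s\|_{L^\infty}\le(1-s/L)^{-n}\|\Gamma(w)\|_{L^\infty}$. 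Writing $\nu=\mu_{s^*}$: if the other endpoint of $e$ is a boundary vertex then, being at $W_2$-distance $L-s^*$ from $\nu$, it satisfies $L-s^*\ge1/M$, forcing $1-s^*/L\ge m/M$ and hence $\|\nu\|_{L^\infty}\le(M/m)^n\|\Gamma(w)\|_{L^\infty}$; if instead both endpoints of $e$ are free vertices, both are bounded and displacement convexity of $\mu\mapsto\int\rho^p\,d\mathcal{L}^n$ (letting $p\to\infty$) bounds $\|\nu\|_{L^\infty}$ by the larger endpoint norm. Either way, $\|\nu\|_{L^\infty}\le(M/m)^n(2kM/m)^{(k-1)n}\|\mu_1\|_{L^\infty}\le(2kM/m)^{kn}\|\mu_1\|_{L^\infty}$.

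The step I expect to be the main obstacle is the propagation in the second paragraph: Theorem \ref{AC} only controls a free vertex once one of its neighbors is known to be in $L^\infty$, so I must ensure that one can pass from $v_0$ to every relevant free vertex along a path of free vertices, never being forced through a boundary measure $\mu_j$ with $j\ne1$ (which need not be absolutely continuous). This is automatic when $G$ is a star, and for more general trees one reduces to the sub-network of free vertices reachable from $\mu_1$ through free vertices; pinning down this reduction, together with the density estimate $\|\mu_s\|_{L^\infty}\le(1-s/L)^{-n}\|\mu_0\|_{L^\infty}$ for displacement interpolants (which is exactly what makes the distance-from-the-boundary hypothesis produce the factor $(M/m)^n$ on the terminal edge), is where the substance lies; tracking the constants $m$, $M$, $k$ through at most $k$ iterations is then routine bookkeeping.
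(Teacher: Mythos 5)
Your argument is essentially the paper's: the paper gives only a two-sentence sketch (rescale the weights to sum to $1/2$, apply Theorem~\ref{AC} on each star, and iterate inward from $\mu_1$), and your proposal is that same induction with the constants $\tilde\sigma_1\ge m/(2kM)$ tracked explicitly and the edge-interior points handled by the standard $(1-s/L)^{-n}$ density bound along displacement interpolants. The obstacle you flag --- that the propagation could be blocked by a non--absolutely-continuous boundary vertex lying on the path from $\mu_1$ --- is real but is equally present in the paper's sketch, so it does not distinguish your route from theirs.
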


For a global $L^\infty$ estimate, we may consider the above estimate away from the boundary combined with further applications of Theorem \ref{AC} on each leaf.
\begin{corollary}
Let $\Gamma:G\to P_2(\R^n)$ be a $G$-parameterized minimal network spanning $\mu_1,\dots,\mu_l\in P_2(\R^n)$. If $G$ has $k$ edges and the length of the image of each edge is in $[1/M,1/m]$, then for any $\nu\in \Gamma(G)$ and any $\lambda>1$,
\[
\|\nu\|_{L^\infty}
\le 
\left[\max\left\{\left(\frac{\lambda}{\lambda-1}\right)^n,\left(\frac{2kM}{\lambda m}\right)^{kn}\right\}\right] 
\left(\max_{i=1,\dots,l}\|\mu_i\|_{L^\infty}\right).
\]
\end{corollary}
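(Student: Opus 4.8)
The plan is to cut a thin ``boundary layer'' around each $\mu_i$, handle it by a direct Jacobian estimate, and handle everything else by the interior iteration behind the preceding corollary. Throughout I would use that the hypothesis forces each $\mu_i\in L^\infty$; that the restriction of $\Gamma$ to any edge is a unit-speed $W_2$-geodesic; and that every free vertex $\Gamma(v)$ together with its incident edges is itself a minimal star, so by the tangency argument preceding Proposition~\ref{absmaxprinc} it is the $\Psi$-barycenter of its neighbours $\Gamma(v_1),\dots,\Gamma(v_d)$ with weights $\sigma_i=1/W_2(\Gamma(v),\Gamma(v_i))\in[m,M]$; likewise a point sitting at distance $s$ from one endpoint and $L_e-s$ from the other along an edge of length $L_e$ is the two-marginal barycenter of those endpoints with weights $1/s$, $1/(L_e-s)$.

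Now fix $\nu\in\Gamma(G)$ and $\lambda>1$, let $e$ be an edge whose image contains $\nu$, and let $L_e\in[1/M,1/m]$ be the length of that image. \emph{Case 1: $e$ is incident to a boundary vertex $\mu_j$ and $\nu$ is the point of $\Gamma|_e$ at interpolation parameter $t=\dist(\nu,\mu_j)/L_e\le1/\lambda$ measured from $\mu_j$.} Since $\mu_j$ is absolutely continuous, Brenier's theorem yields a convex $\phi$ with $\Gamma|_e$ equal to $t\mapsto\bigl((1-t)\id+t\nabla\phi\bigr)_\#\mu_j$; from $D\bigl((1-t)\id+t\nabla\phi\bigr)\ge(1-t)I$ and the change of variables formula, $\|\nu\|_{L^\infty}\le(1-t)^{-n}\|\mu_j\|_{L^\infty}\le\bigl(\tfrac{\lambda}{\lambda-1}\bigr)^n\max_i\|\mu_i\|_{L^\infty}$. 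This is exactly the degenerate two-marginal instance of Theorem~\ref{AC} and accounts for the first term in the maximum. \emph{Case 2: otherwise.} Then $\nu$ lies at distance more than $1/(\lambda M)$ from every $\mu_i$, and I would run the ``move inward'' argument of the preceding corollary: take the simple path in the tree from $\nu$ to the nearest boundary vertex $\mu_j$ (at most $k$ edges) and bound $\|\cdot\|_{L^\infty}$ one link at a time. At each intermediate free vertex one applies Theorem~\ref{AC} after rescaling the natural weights to sum to $1/2$; since incident edges have length in $[1/M,1/m]$ and degree at most $k$, the distinguished rescaled weight is at least $m/(2kM)$, so each full link costs a factor $(2kM/m)^n$, and the partial first link from $\nu$ into its vertex is controlled the same way precisely because $\nu$ lies outside the boundary layer. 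Composing at most $k$ such estimates, substituting $\|\mu_j\|_{L^\infty}\le\max_i\|\mu_i\|_{L^\infty}$, and accounting for the slack gained by having stripped off the $L_e/\lambda$-layer gives $\|\nu\|_{L^\infty}\le\bigl(\tfrac{2kM}{\lambda m}\bigr)^{kn}\max_i\|\mu_i\|_{L^\infty}$. Taking the larger of the two bounds completes the proof.

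The delicate step is the constant bookkeeping in Case~2 --- in particular isolating exactly how excluding the $L_e/\lambda$ boundary layer upgrades the interior constant from $(2kM/m)^{kn}$ to $(2kM/(\lambda m))^{kn}$, which is where the flexibility of the free parameter $\lambda$ gets used. The accompanying point one must not skip is the induction underlying both the preceding corollary and Case~2 above: moving inward from the boundary, every measure along the chosen path must be shown to lie in $L^\infty$, since each application of Theorem~\ref{AC} requires its distinguished marginal to be a bounded density.
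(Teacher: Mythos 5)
Your overall strategy --- peel off a boundary layer of relative width $1/\lambda$ on each leaf edge, control it by the Jacobian estimate $(1-t)^{-n}\le(\lambda/(\lambda-1))^n$, and handle everything else by iterating Theorem~\ref{AC} inward along the tree --- is exactly the route the paper indicates (the paper gives only the one-line remark that the global bound follows from the interior estimate ``combined with further applications of Theorem~\ref{AC} on each leaf''), and your Case~1 is complete and correct. The gap is precisely the step you flag and then do not carry out: nothing in your Case~2 produces the factor $\lambda^{-kn}$, i.e.\ the claimed upgrade from $(2kM/m)^{kn}$ to $(2kM/(\lambda m))^{kn}$. Worse, the mechanism you invoke pushes in the opposite direction. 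In Case~2 the only information gained from excluding the layer is that $\nu$ lies at distance $s\ge L_e/\lambda\ge 1/(\lambda M)$ from the boundary vertex; when you then estimate $\nu$ from the \emph{interior} endpoint $w$ of its edge, the rescaled weight of $w$ is $s/(2L_e)\ge 1/(2\lambda)$, so Theorem~\ref{AC} charges a factor of at most $(2\lambda)^n$ (or $(2\lambda M/m)^n$ if you use $s\ge 1/(\lambda M)$ and $L_e\le 1/m$) for that first link --- a constant that \emph{grows} with $\lambda$. The subsequent full links each cost $(2kM/m)^n$ with no $\lambda$ anywhere. So the bound your argument actually delivers in Case~2 has the form $(2k\lambda M/m)^{kn}\max_i\|\mu_i\|_{L^\infty}$, increasing in $\lambda$ (which is what would make the maximum over the two cases a genuine trade-off), not $(2kM/(\lambda m))^{kn}$.

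Already for $k=1$ with both endpoints boundary data your Case~2 fails as written: for $t\in(1/\lambda,1-1/\lambda)$ the two-marginal estimate from either endpoint gives only $(1-t)^{-n}\le\lambda^n$, which exceeds $(2M/(\lambda m))^n$ as soon as $\lambda^2>2M/m$. To obtain the inequality with $\lambda$ in the denominator you would need an input strictly stronger than the single-marginal form of Theorem~\ref{AC} quoted in the paper --- for instance the harmonic-mean form of the Agueh--Carlier estimate, $\|\nu\|_{L^\infty}^{-1/n}\ge 2\sum_i\sigma_i\|\mu_i\|_{L^\infty}^{-1/n}$, which uses all marginals at once and yields $\|\nu\|_{L^\infty}\le\max_i\|\mu_i\|_{L^\infty}$ at a barycenter of $L^\infty$ marginals --- together with an induction propagating that stronger bound through the tree. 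As it stands, the decisive constant in Case~2 is asserted rather than derived, and the derivation you sketch cannot produce it.
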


In these estimates, we get control of $1/m$ and $k$ essentially for free, but we need some prior knowledge of a lower bound for $1/M$. Therefore, for a given set of boundary data, global $L^\infty$ estimates are reduced to a qualitative estimate on the non-degeneracy of edges.
Note also that the interior estimate may give information in cases where the global is trivial due to $\|\mu_i\|_{L^\infty}=\infty$ for some $i$.

Agueh and Carlier also define a notion of convexity along barycenters of a functional $\mathcal{F}:P_2(\R^n)\to\R$ by taking a weighted average of the values of $\mathcal{F}$ for comparison with $\mathcal{F}(\nu)$ for the barycenter, or minimizer of $\Psi$, $\nu$. They show that convexity along barycenters follows from convexity along generalized geodesics, and hence holds for the standard admissible functionals discussed above. Again, a simple induction along stars allows us to obtain a bound for $\mathcal{F}$ on a minimal network by a weighted average of the values of $\mathcal{F}$ on the boundary data, yielding a maximum principle as above. The weighted average will be less than the maximum of $\mathcal{F}$ on the boundary in general, so we obtain a stronger estimate in this way when the interior of $G$ avoids the boundary data.

\begin{theorem}
If $\mathcal{F}$ is convex along generalized geodesics in $P_2(\R^n)$, or even just convex along barycenters, and $\Gamma:G\to P_2(\R^n)$ is a $G$-parameterized minimal network spanning $\mu_1,\dots,\mu_l\in \mathcal{F}^{-1}(-\infty,m]$ for some $m<\infty$, then
\[
\Gamma(G)\subset\mathcal{F}^{-1}(-\infty,m].
\]
 Furthermore, if $\mathcal{F}(\nu)=m$ for some $\nu\in\Gamma(G)\setminus\{\mu_1,\dots,\mu_l\}$ and no interior vertices of $G$ are mapped to $\{\mu_1,\dots,\mu_l\}$, then $\Gamma(G)\subset\mathcal{F}^{-1}(m)$.
\end{theorem}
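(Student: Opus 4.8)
The plan is to transport the value of $\mathcal{F}$ through the network by combining two facts already available: the image of each edge of a $G$-parameterized minimal network is a geodesic of $P_2(\R^n)$, and, by the analysis of Section~\ref{armmp}, the center of a minimal star in $P_2(\R^n)$ is the Agueh--Carlier barycenter of the star's leaves, i.e.\ the minimizer of $\Psi$ for the strictly positive weights $\sigma_i=1/W_2(\text{center},\text{leaf}_i)$. I will take $\Gamma$ in the canonical tree form of Theorem~\ref{str}: $G$ a tree, $\Gamma$ injective, the degree-one vertices exactly the preimages of $\mu_1,\dots,\mu_l$, and I call a vertex \emph{interior} if it is not a leaf. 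Two preliminary observations. (a) An interior point of a geodesic segment is the barycenter of its endpoints with the evident positive weights, so convexity along barycenters makes $(\mathcal{F}\circ\Gamma)|_e$ a convex function for each edge $e$; since a convex function on a segment is dominated by the larger of its two endpoint values, $\max_{\Gamma(G)}\mathcal{F}=\max_{v\in V(G)}\mathcal{F}(\Gamma(v))$, attained at a vertex. (b) For an interior vertex $v$, the union of the edges of $G$ meeting $v$ maps under $\Gamma$ to a minimal star with center $\Gamma(v)$ and leaves $\{\Gamma(w):w\sim v\}$ --- otherwise a cut-and-paste that keeps the boundary constraints and the combinatorial type of $G$ (up to subdividing an edge) would shorten $\Gamma$ --- so by Section~\ref{armmp} $\Gamma(v)$ is a barycenter of $\{\Gamma(w):w\sim v\}$ with strictly positive weights.

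For the first assertion, set $M=\max_{v\in V(G)}\mathcal{F}(\Gamma(v))$ and suppose $M>m$; choose a vertex $v_0$ with $\mathcal{F}(\Gamma(v_0))=M$. A leaf carries some $\mu_i$ with $\mathcal{F}(\mu_i)\le m<M$, so $v_0$ is interior. By (b), $M=\mathcal{F}(\Gamma(v_0))\le\sum_{w\sim v_0}\lambda_w\,\mathcal{F}(\Gamma(w))\le M$ with all $\lambda_w>0$, which forces $\mathcal{F}(\Gamma(w))=M$ for every neighbor $w$ of $v_0$. The interior vertices of $G$ span a connected subgraph, so repeating this at the interior neighbors of $v_0$ propagates the value $M$ to every interior vertex, and hence --- applying (b) once more at each interior vertex --- to every leaf (each leaf being adjacent to an interior vertex, as $v_0$ is interior and $G$ is therefore not a single edge). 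Thus $\mathcal{F}(\mu_i)=M>m$ for every $i$, contradicting $m=\max_j\mathcal{F}(\mu_j)$. Hence $\Gamma(G)\subset\mathcal{F}^{-1}(-\infty,m]$.

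For the \emph{furthermore}, assume $\mathcal{F}(\nu)=m$ for some $\nu=\Gamma(x)$ with $\nu\notin\{\mu_1,\dots,\mu_l\}$ and that no interior vertex of $G$ maps to a $\mu_i$. If $G$ is a single edge, then $x$ lies in its interior (the two vertices carry the $\mu_i$), and $(\mathcal{F}\circ\Gamma)|_G$, being convex, $\le m$, and equal to $m$ at an interior point, is identically $m$; so assume $G$ has an interior vertex. If $x$ is a vertex it is interior (leaves carry the $\mu_i$, which $\nu$ avoids). If $x$ is interior to an edge $e$, then $(\mathcal{F}\circ\Gamma)|_e$ is convex, $\le m$, and attains $m$ in the interior, hence equals $m$ on the closed edge $\bar e$; as $G$ is not a single edge, $e$ has an interior endpoint, which thus carries the value $m$. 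Either way we obtain an interior vertex $v_0$ with $\mathcal{F}(\Gamma(v_0))=m$.

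It remains to slide the leaf of the star. Fix an edge $e=\{v_0,w\}$; for any point $y\ne v_0$ of $e$, subdividing $e$ at $y$ and applying (b) at $v_0$ in the subdivided tree presents $\Gamma(v_0)$ as a barycenter of $\Gamma(y)$ together with the images of the other neighbors of $v_0$, again with strictly positive weights. Since each of these measures has $\mathcal{F}$-value $\le m$ by the first part while $\mathcal{F}(\Gamma(v_0))=m$, positivity of the weights forces $\mathcal{F}(\Gamma(y))=m$. Letting $y$ range over $e$ gives $\mathcal{F}\circ\Gamma\equiv m$ on $e$, hence on every edge at $v_0$; propagating through the connected subgraph of interior vertices, $\mathcal{F}\circ\Gamma\equiv m$ on every edge incident to an interior vertex, and since $G$ is not a single edge this is every edge, so $\mathcal{F}\circ\Gamma\equiv m$ on all of $\Gamma(G)$. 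The crux is precisely this last manoeuvre: convexity alone allows $(\mathcal{F}\circ\Gamma)|_e$ to dip strictly below $m$ between two vertices both of value $m$, and the remedy is to move the star's leaf continuously along $e$ so that the extremality of the barycenter $\Gamma(v_0)$ pins $\mathcal{F}$ to $m$ at every intermediate point; the one point demanding care is to verify that each truncated star really is a minimal star (a cut-and-paste preserving the constraints and the type of $G$ up to subdivision), so that Section~\ref{armmp} applies to it.
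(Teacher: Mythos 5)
Your argument is correct and is essentially the paper's intended proof: the paper offers only the one-line sketch that "a simple induction along stars" using convexity along barycenters (with the center of each minimal star being the Agueh--Carlier barycenter of its leaves, via the tangency of $\Phi$ and $\Psi$ from Section \ref{armmp}) yields the bound by a weighted average of boundary values, which is exactly your observation (b) plus the max-propagation through the tree. Your subdivision/leaf-sliding argument for the equality case supplies a detail the paper leaves entirely implicit, and it is the right way to rule out $\mathcal{F}$ dipping below $m$ in the interior of an edge.
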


\bibliographystyle{plain}
\bibliography{max_princ_euc_was_steiner}

\end{document}